\numberwithin{equation}{section}
\newtheorem{theorem}{Theorem}[section]
\theoremstyle{definition}
\newcommand{\integer}{{\mathbb {Z}}}
\newcommand{\complex}{{\mathbb {C}}}
\newcommand{\real}{{\mathbb {R}}}
\newcommand{\TT}{{\mathbb {T}}}
\newcommand{\BB}{{\mathcal{B}}}
\newcommand{\FF}{{\mathcal{F}}}
\newcommand{\JJ}{{\mathcal{J}}}
\newcommand{\LL}{{\mathcal{L}}}
\newcommand{\MM}{{\mathcal{M}}}
\newcommand{\RR}{{\mathcal{R}}}
\newcommand{\UU}{{\mathcal{U}}}
\newcommand{\VV}{{\mathcal{V}}}
\newcommand{\NT}{{\overset{NT}{\rightarrow}}}
\newcommand{\divv}{\operatorname{div}}
\newcommand{\gradd}{\operatorname{grad}}
\title[Linear Response, Or Else]{Linear Response, Or Else}
\author[Viviane BALADI]{Viviane BALADI
\thanks{The toy model in Section~\ref{toy} was presented at a
minicourse at the Dynamical Systems Days in Antofagasta, Chile,  December 2007.}}
\begin{document}

\begin{abstract}
Consider a smooth one-parameter family $t\mapsto f_t$ of dynamical systems $f_t$,
with $|t|<\epsilon$. Assume 
that for all $t$ (or for many $t$ close to $t=0$) 
the map $f_t$ admits a unique SRB invariant  probability
measure $\mu_t$. 
We say that \emph{linear response} holds
if $t\mapsto \mu_t$ is differentiable at $t=0$
(possibly in the sense of Whitney), and if its derivative can
be expressed as a  function of $f_0$, $\mu_0$, and $\partial_t f_t|_{t=0}$.
The goal of this note is to present to a general mathematical audience recent results and
open problems in the  theory of linear response for chaotic dynamical systems, possibly
with bifurcations.
\end{abstract}

\begin{classification}
Primary 37C40; Secondary 37D25, 37C30, 37E05.
\end{classification}

\begin{keywords}
Linear response. Transfer operator. Ruelle operator. Physical measure. SRB measure. Bifurcations.
Differentiable dynamical system. Unimodal maps. Expanding interval maps. 
Hyperbolic dynamical systems.
\end{keywords}

\maketitle

\section{Introduction}
A discrete-time dynamical system is a self-map
$f:M\to M$ on a space $M$. To any point $x\in M$ 
is then associated its (future)
orbit $\{ f^n(x)\mid n \in \integer_+\}$ where
$f^0(x)=x$, and $f^n(x)=f^{n-1}(f(x))$, for
$n\ge 1$, represents
the state of the system at time $n$, given the ``initial
condition'' $x$. (If $f$ is invertible, one can also
consider the past orbit $\{ f^{-n}(x)\mid n \in \integer_+\}$.)
In this text, we shall always assume that $M$
is a compact differentiable manifold (possibly with boundary),
with the Borel $\sigma$-algebra,
endowed with a Riemannian structure and thus normalised Lebesgue measure.
Many natural dynamical systems are
``chaotic'' (in particular, a small error in the initial
condition will grow exponentially with time) and best understood
via ergodic theory. The ergodic approach often starts with
finding a ``natural''
 invariant probability measure $\mu$ (a probability measure is
invariant if $\mu(f^{-1}(E))=\mu(E)$ for every Borel set).
Lebesgue measure is not always invariant, although there are
important exceptions such as the angle-doubling map
$x\mapsto 2x$ modulo $1$ on the circle,  hyperbolic linear toral automorphisms
such as the  ``cat map'' $A_0$ defined in \eqref{catmap} below, or  symplectic
diffeomorphisms. However, many interesting dynamical systems which do
not preserve Lebesgue admit a 
``physical'' invariant probability measure: The \emph{ergodic basin} of 
an $f$-invariant probability measure
$\mu$  is the set of those initial conditions for which time averages
converge to the space average for every continuous function
$\varphi :M \to \complex$, i.e., the set
$$
\{ x \in M \mid \lim_{n \to \infty} \frac{1}{n} \sum_{k=0}^{n-1} \varphi(f^k(x))
=\int \varphi\, d\mu\, , \,\, \forall \varphi \in C^0 \}.
$$
An invariant probability measure
$\mu$ is called \emph{physical} if 
its ergodic basin
has positive Lebesgue measure.

If $\mu$ is $f$-invariant and absolutely continuous with respect to
Lebesgue then, if it is in addition ergodic, 
it is a physical measure because of the Birkhoff ergodic theorem.
It was one of the breakthrough discoveries of the 60's, by
Anosov and others, that many natural dynamical systems (in particular
 smooth hyperbolic attractors) admit
finitely many physical measures, while in general they do \emph{not} admit any absolutely
continuous invariant measure.
Physical measures are sometimes called SRB
\footnote{The notions of SRB and physical measures
do not always coincide, see \cite{Yo}. In the present expository note,
we shall  ignore this fact.} measures after
Sinai, Ruelle, and Bowen, who studied them in
the sixties \cite{Yo}.

Instead of a single discrete-time dynamical system $f$, let us now
consider a one-parameter family $t\mapsto f_t$ of dynamical systems
on the same space $M$, where $t\in [-,\epsilon,\epsilon]$, for
$\epsilon >0$. We assume that the map $t\mapsto f_t$ is   ``smooth''
(i.e.,   $C^k$ for some $1<k\le \infty$),
taking a suitable topology in the image, e.g., that of
$C^\ell$ diffeomorphisms, or (piecewise) $C^\ell$ endomorphisms of $M$, for some
$\ell > 1$. We can view $f_t$ as a perturbation of
the dynamics $f:=f_0$. Let us \emph{assume} that there exists
a closed set $\Lambda$, containing $0$ as an accumulation
point, such that the map $f_t$ admits
a unique physical measure for every $t\in \Lambda$. (We shall give examples where this
assumption holds below.) The question we are interested in
is:  \emph{Does the map $t\mapsto \mu_t$
inherit any of the smoothness of $t\mapsto f_t$ at the
point $t=0$?} In particular, is $t\mapsto \mu_t$  differentiable at $t=0$
(possibly by requiring $k$ and $\ell$ large enough)?

 As such, the question is not well defined, because we must
be more precise regarding both the \emph{domain} $\Lambda$
and the \emph{range} $\{\mu_t \mid t\in \Lambda\}$ of the map $t \mapsto \mu_t$.
If $\Lambda$ contains a neighbourhood $U$ of $0$, then differentiability
is understood in the usual sense, and differentiability properties
usually hold throughout $U$.
However, if $\Lambda$ does not contain
\footnote{One could also decide to restrict $\Lambda$ even if it 
originally contains
a neighbourhood of $0$.} any neighbourhood of $0$,
``differentiability'' of $t \mapsto \mu_t$ on $\Lambda$ should be understood in the
sense of the Whitney extension theorem, as was pointed out by Ruelle \cite{R2}. In other words, the map
$t\mapsto \mu_t$ is called $C^m$ at $0\in \Lambda$ for a real number $m>0$
if this map admits a $C^m$ extension
 from $\Lambda$ to an open neighbourhood of $0$.
If $0\le m<1$ this is just  continuity or H\"older continuity
on a metric set.
For $m=1$, e.g.,  then ``$\mu_t$ is $C^1$ in the sense of Whitney
on $\Lambda$ at $t=0$'' means that there exists a continuous function
$\mu_s^{(1)}$, defined for $s\in \Lambda$, so that 
$$\mu_s =\mu_0  +s \mu^{(1)}_s +R_s\, , \, 
\mbox{with }R_s=o(|s|)
\, , \quad \forall s\in \Lambda\, .
$$
In order to give a precise meaning to
$=o(|s|)$,  we need to
be more specific regarding the topology used in the \emph{range.} 
Even if $\mu_t$ has a density with respect to Lebesgue,
the $L^1$ norm of this
density can be too strong to
get differentiability.  What is often suitable is a distributional norm, i.e., the
topology of the dual of $C^r$ for some $r \ge 0$ ($r=0$ corresponds
to viewing $\mu_t$ as a Radon measure). In other words, 
the question is the differentiability of
$$
t \mapsto \int \varphi \, d\mu_t\,  .
$$
where the ``observable'' $\varphi$ belongs to $C^r(M)$.
In some cases $(C^r(M))^*$ 
 can be replaced by a space of
anisotropic distributions (see \S\ref{STST}).

We emphasize that considering a strict subset $\Lambda_0\subset \Lambda$  containing $0$
as an accumulation point may change the class of Whitney-$C^m$ maps at $0$:
A given map $\mu_t$ defined on $\Lambda$ could be (Whitney) $C^m$ at
$0\in \Lambda_0$, but \emph{not} (Whitney) $C^m$ at $0\in \Lambda$.
It seems fair to take a ``large enough'' $\Lambda$,
for example by requiring $0$ to be a Lebesgue density point in $\Lambda$
(i.e., $\lim_{r \to 0} m(\Lambda \cap [-r,r])/(2r)=1$),
or at least $0$ \emph{not} to be a point of dispersion in $\Lambda$
(i.e., $\lim_{r \to 0} m(\Lambda \cap [-r,r])/(2r)>0$).

\smallskip
We shall focus on $0<m\le 1$.  
(Higher differentiability results, including formulas,
can be obtained  \cite{Ruru}
if one makes stronger smoothness assumptions on the individual dynamical
systems $x\mapsto f_t(x)$ and on the map 
$t \mapsto f_t$.)
If we can prove, under some assumptions on
the family $f_t$, on the set $\Lambda$, and on  $k$, $\ell$, and $r$, that
the map $t\mapsto \mu_t$ is differentiable at $0\in \Lambda$, then it
is natural to ask if there is a formula for
$$
\partial_t \int \varphi \, d\mu_t |_{t=0}
$$
in terms of $f_0$, $\mu_0$, $\varphi$, and the vector field $v_0:=\partial_t f_t|_{t=0}$.
If such a formula exists, it is called the \emph{linear response formula}
(it gives the response to first order of the system in terms
of the first order of the perturbation).
We shall assume that the perturbation takes place
in the image point, i.e., there exists vector fields $X_s$
so that
\begin{equation}
v_s:=\partial_t f_t|_{t=s}=X_s \circ f_s \, , \quad \forall s, t \in [-\epsilon,\epsilon] \, .
\end{equation}
(If each $f_s$ is invertible, the above is just a  definion of $X_s$.)
The mathematical study of linear response has been initiated
by Ruelle. In \S~\ref{STST}, we shall present his pioneering result
\cite{R0} 
on smooth hyperbolic systems (Axiom A attractors). Let us
just mention now the key \emph{linear response formula} he obtained in \cite{R0}
for smooth hyperbolic attractors $f_t$ and smoooth observables $\varphi$:
\begin{equation}\label{magic}
\partial_t \int \varphi\, \rho_t\, dx |_{t=0}=
\sum_{j=0}^\infty \int \langle X_0, \gradd (\varphi \circ f_0^j)\rangle  \,  d\mu_0\, ,
\end{equation}
where the sum is exponentially converging.
In   \cite{R4}, Ruelle had shown how to derive \eqref{magic}
from heuristic arguments, which  suggested to consider the following
\emph{susceptibility function} associated to $f_t$ and 
$\varphi$:
\begin{equation}\label{SF}
\Phi_t(z)=
\sum_{j=0}^\infty \int  z^j \langle X_0, \gradd (\varphi \circ f_0^j)\rangle \, d \mu_0 \, .
\end{equation}
Under very weak assumptions, the power series
$\Phi_t(z)$ (often denoted
$\Phi_t(e^{i\omega})$) has a nonzero radius of convergence. If the radius of convergence
is $\le 1$ and the series in the right-hand-side of
\eqref{magic} does not converge, Ruelle \cite[($**$)]{R5} suggested that the value at $z=1$
could sometimes be obtained by analytic continuation, possibly giving 
the linear response formula. However, caution is necessary,
as it was  discovered since then
  (see Section~\ref{smooth} below) that linear response fails \cite{BBS}
in cases where  a meromorphic continuation was known to
exist
 \cite{R2},  (see also the presentation of the results of \cite{BMS} in Section~\ref{pw}.)

Before we sketch the contents of this note, we make two 
simple but essential remarks on 
\eqref{magic}.
First note that the higher-dimensional version of the Leibniz expression $(X\rho)'=X'\rho+X \rho'$
reads 
$$\rho \divv X + \langle X,\gradd \rho\rangle\, .$$
Second, defining the transfer operator associated to 
an invertible
\footnote{See \eqref{endoop} for the noninvertible version.} dynamical system $f_t$ (acting, e.g., on
$L^\infty$ or $L^1$) by
$$
\LL_t \varphi(x)=\frac{\varphi (f_t^{-1}(x))}{|\det Df_t (f_t^{-1}(x))|},
$$
we have $\int \LL_t(\varphi)\, dx= \int \varphi\, dx$, for all $\varphi$
(since the dual of $\LL_t$ preserves Lebesgue
measure, this is the change of variable formula in
an integral). If the transfer operator
has a nonnegative fixed point $\LL_t \rho_t=\rho_t\in L^1$, 
then $\mu_t=\rho_t\, dx$ is an absolutely continuous
invariant probability measure for $f_t$ and thus (if ergodic) a physical measure.
In this case, if the eigenvalue $1$
for $\LL_t$ is simple and isolated, Ruelle's formula \eqref{magic} and integration by parts give, 
\begin{align}\nonumber
\partial_t \int \varphi\, \rho_t\, dx |_{t=0}&=
\sum_{j=0}^\infty \int \langle X_0, \gradd (\varphi \circ f_0^j)\rangle  \rho_0   \, dx\\
\nonumber &=
-\sum_{j=0}^\infty \int \varphi \circ f_0^j  (\rho_0 \divv X_0 + \langle X_0,\gradd \rho_0
\rangle) \, dx\\
\nonumber&=
-\sum_{j=0}^\infty \int \varphi  \LL_0^j ( \rho_0 \divv X_0 + \langle X_0,\gradd \rho_0
\rangle )\, dx\\
\label{magic'}&=
-\int \varphi  (1-\LL_0)^{-1} ( \rho_0 \divv X_0 + \langle X_0,\gradd \rho_0
\rangle )\, dx\, .
\end{align}
 Note that the residue of 
$(1-z\LL_0)^{-1} ( \rho_0 \divv X_0 + \langle X_0,\gradd \rho_0
\rangle )\, dx$  at $z=1$
vanishes, because Lebesgue measure
is the fixed point of $\LL_0^*$, and
the manifold is boundaryless,  so that 
$\int (\rho_0 \divv X_0 + \langle X_0,\gradd \rho_0
\rangle) dx =0$, by  integration by parts.
The ``metaformula'' \eqref{magic'} for linear response 
in the last line can be guessed  by applying perturbation theory
to the fixed point $\rho_t$ of the operator $\LL_t$.
We shall see  in \S~\ref{STST} instances where the above is  a
rigorous argument, \emph{even in cases where $\mu_t$ is not absolutely
continuous with respect to Lebesgue} (then, $\mu_t$ is a distribution,
enjoying smoothness along unstable directions), and
in Section ~\ref{OE} instances
where the computation above is invalid, even in cases where $\mu_t$ 
\emph{is in fact} absolutely
continuous with respect to Lebesgue.
We emphasize that the tricky point is that the resolvent $(1-z\LL_0)^{-1}$
is evaluated at an expression involving differentiation of $\rho_0$: While $\rho_0$
itself often belongs to a space on which $\LL_0$ has nice spectral properties, this
is not always true for its derivative.

The note is organised as follows: In \S~\ref{toy}, we give a complete proof of
linear response in the baby toy model of smooth locally expanding circle maps.
Section~\ref{LR} contains an account of two nontrivial occurrences of
linear response in chaotic dynamics: The breakthrough \cite{R0} of Ruelle for smooth
hyperbolic systems is presented in \S~\ref{STST}, while Dolgopyat's result \cite{Do}
in a (not necessarily structurally stable) partially hyperbolic case is stated in \S~\ref{DoLR}.
The next section, which contains both recent
results and open problems, is devoted to situations where linear response
is
violated: We consider first the toy model of piecewise  expanding
interval maps, presenting in \S~\ref{pw} our results 
\cite{BS1, BS1'} with Smania, and those with Marmi--Sauzin
\cite{BMS}.
Then, we focus on the -- more difficult --  smooth, nonuniformly expanding,
unimodal interval maps, discussing  in  \S~\ref{smooth} the work of Ruelle \cite{R3},
together with
our  work with Smania  \cite{BS2', BS2}, and our recent paper with Benedicks and
Schnellmann \cite{BBS}. Finally, \S~\ref{theproofs} contains a brief account
of the techniques of proofs in \cite{BBS}. 

The survey  published by Nonlinearity in 2008 \cite{B2}
contains a broad viewed account of the results, open problems, and conjectures at the time,
with an emphasis on the role played by critical points
(or more generally homoclinic tangencies) in the breakdown of
linear response. That survey is thus complementary to the present more
introductory  presentation. (In view of the page limitation for this contribution, we sometimes 
do not give fully explicit statements and definitions, the reader
is invited to consult the quoted references for clarification.)

We refer to Ruelle's
articles \cite{R4,R5,R6} for motivation, applications to physics, and more conjectures.
See also the interesting approach of Hairer and Majda \cite{HM}, including
references of applications to climate-change.
 In the present note, we do not discuss linear response for
continuous time dynamics \cite{R8, BL}, or for dynamical systems
in infinite dimensions (such as coupled map lattices \cite{JL1, JL2}).

\section{The toy model of expanding circle maps}
\label{toy}

In this section we present a proof of linear response in the
(baby) toy model of smooth expanding circle maps. The result and proof
are  well known (and simpler than the analogous
arguments in \cite{GL2, BS1}), but we are not aware of any reference.

Let $M=S^1$ be the unit circle, and let $f:S^1\to S^1$
be a $C^2$ map which is \emph{$\lambda$-locally expanding,} i.e., there exists
$\lambda >1$ so that $|f'(x)|\ge \lambda$ for all $x$.
It is known \cite{KS} that such an $f$ admits a unique absolutely continuous
invariant probability measure $\mu=\rho\, dx$. This measure  is mixing and therefore ergodic.
So a $C^2$ locally expanding map
$f$ admits a unique physical measure.  In fact, $\rho$ is $C^1$, and
it is everywhere strictly positive.
The transfer
operator
\footnote{The number of terms in the sum is a constant finite
integer $\ge 2$, the \emph{degree} of the map.}
\begin{equation}\label{endoop}
\LL \varphi(x)=\sum_{f_0(y)=x} \frac{\varphi(y)}{|f_0'(y)|}
\end{equation}
is  bounded on $C^1(S^1)$. It is known (see  \cite{B0}, e.g.,  for the relevant references
to Ruelle and others) that $\rho$ is a fixed point of $\LL$,
that the eigenvalue $1$ of $\LL$ (acting on $C^1(S^1)$) has algebraic
multiplicity equal to one, and that the rest of the spectrum of $\LL$
is contained in a disc of radius strictly smaller than one. 
(Thus, $\LL$ acting on $C^1(S^1)$ has a spectral gap.)
Note that the eigenvector of $\LL^*$ for the eigenvalue $1$
is just normalised Lebesgue measure (by the change of variable
formula).

Fix $\lambda >1$, and consider  a $C^2$ path $t\mapsto f_t$ for $t\in (-\epsilon, \epsilon)$, where
each $f_t$ is now $C^3$ and locally $\lambda$-expanding
(then, $\LL_t$ acts on $C^2$, and $\rho_t\in C^2$). Assume that 
$\|f_t-f_s\|_{C^3(S^1,S^1)}=O(|t-s|)$.
Then, using the
fact that $\LL_t$ (acting on $C^2(S^1)$ or
$C^1(S^1)$) satisfies the following \emph{Lasota--Yorke} (or Doeblin--Fortet) \footnote{What is essential here is the
compact embedding of $C^j$ -- the strong norm -- in $C^{j-1}$ --
the weak norm.} inequalities
\begin{equation}\label{LaYo}
\|\LL_t^k\varphi\|_{C^{j}} \le C\xi^k \|\varphi\|_{C^{j}} + C ^k \|\varphi\|_{C^{j-1}}\, ,
\quad \forall \varphi \, , \forall k \ge 1\, , j=1, 2\, , 
\end{equation}
(with uniform $0<\xi <1$ and $C\ge 1$),
together with \footnote{See Step 1 in the proof 
of Theorem~\ref{toythm} for a stronger claim.}
$$\|(\LL_t-\LL_0)\varphi\|_{C^1}= O(|t|) \|\varphi\|_{C^2}\, ,$$ 
one obtains
strong deterministic stability:

\begin{theorem}[Strong deterministic stability, \cite{BY}]\label{prehistory}
There exists $C>0$ so that 
$$\|\rho_t-\rho_s\|_{C^1}\le C|t-s|\, \, , \, 
\forall \, t, s \in (-\epsilon,
\epsilon)\, .
$$
In addition, for any $t$ there exists $\tau <1$, so that, 
for all $s$ close enough to $t$, the spectrum of
$\LL_s$, acting on $C^1(S^1)$ or $C^2(S^1)$, outside of the disc of radius $\tau$ consists 
exactly in
the simple eigenvalue $1$.
\end{theorem}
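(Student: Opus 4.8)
The plan is to combine the uniform Lasota--Yorke inequalities \eqref{LaYo} with the continuity estimate $\|(\LL_t-\LL_s)\varphi\|_{C^1}=O(|t-s|)\|\varphi\|_{C^2}$ through a perturbative spectral argument, and then to read off the Lipschitz bound on $\rho_t$ from a resolvent identity. First I would record the \emph{individual} gaps. Since the constants $\xi<1$ and $C\ge 1$ in \eqref{LaYo} are uniform in $t$, a standard quasi-compactness argument (Hennion--Nussbaum, or Ionescu-Tulcea--Marinescu) shows that each $\LL_t$ is quasi-compact on $C^1(S^1)$, with essential spectral radius at most $\xi$; the case $j=2$ gives the same on $C^2(S^1)$. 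Each $\LL_t$ fixes the strictly positive density $\rho_t$, so $1$ is an eigenvalue, and, exactly as for a single map in the discussion preceding the theorem, positivity together with mixing (hence aperiodicity) of $f_t$ shows that $1$ is simple and is the only spectral point on the unit circle. Thus every $\LL_t$ has a spectral gap; the real content of the theorem is that this gap does not close up as the parameter moves.

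The second, and central, step is to make the gap uniform. Here the difficulty, and the main obstacle of the whole argument, is that the perturbation $\LL_t-\LL_s$ is \emph{not} small in the strong operator norm: $\|\LL_t-\LL_s\|_{C^1\to C^1}$ need not tend to $0$, and only the mixed-norm bound $\|\LL_t-\LL_s\|_{C^2\to C^1}=O(|t-s|)$ is available. Classical analytic perturbation theory of the resolvent therefore does not apply directly. I would instead invoke the Keller--Liverani perturbation theorem for the pair of norms $(C^2,C^1)$: given the uniform Lasota--Yorke inequality \eqref{LaYo} with $j=2$ and the continuity $\|\LL_t-\LL_s\|_{C^2\to C^1}=O(|t-s|)$, the isolated spectral data outside the disc of radius $\xi$ depend continuously on the parameter, and no eigenvalue can escape toward the unit circle. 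Concretely, for each $t$ this produces a radius $\tau\in(\xi,1)$ and a neighbourhood of $t$ on which the only spectrum of $\LL_s$ outside $\{|z|\le\tau\}$ is the simple eigenvalue $1$; this is the second assertion of the theorem. The same scheme applied to the pair $(C^1,C^0)$, using the corresponding estimate $\|\LL_t-\LL_s\|_{C^1\to C^0}=O(|t-s|)$ (the stronger continuity alluded to in the footnote), handles the spectrum on $C^1(S^1)$. A by-product, which I will use below, is a uniform bound on the resolvent $(1-\LL_t)^{-1}$ restricted to the codimension-one subspace $\{\int\varphi\,dx=0\}$, on which the iterates satisfy $\|\LL_t^k\varphi\|_{C^1}\le C'\tau^k\|\varphi\|_{C^1}$ uniformly in $t$.

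Finally I would extract the Lipschitz estimate. Subtracting the fixed-point relations $\rho_t=\LL_t\rho_t$ and $\rho_s=\LL_s\rho_s$ yields
\[
(1-\LL_t)(\rho_t-\rho_s)=(\LL_t-\LL_s)\rho_s\,.
\]
Both $\rho_t$ and $\rho_s$ are probability densities, so $\rho_t-\rho_s$ belongs to the zero-integral subspace, on which $1-\LL_t$ is boundedly invertible with norm uniform in $t$ by the uniform gap. Hence
\[
\rho_t-\rho_s=(1-\LL_t)^{-1}(\LL_t-\LL_s)\rho_s\,,
\]
and taking $C^1$ norms, together with $\|(\LL_t-\LL_s)\rho_s\|_{C^1}=O(|t-s|)\|\rho_s\|_{C^2}$ and the uniform $C^2$ bound on $\rho_s$ (obtained from \eqref{LaYo} with $j=2$ applied to $\rho_s=\LL_s^k\rho_s$, fixing $k$ so that $C\xi^k\le 1/2$), gives $\|\rho_t-\rho_s\|_{C^1}\le C|t-s|$.

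In summary, the first and third steps are comparatively routine bookkeeping once the uniform gap is in hand; everything hinges on the second step, where the failure of smallness in the strong norm forces one to replace naive perturbation theory by the two-norm resolvent estimates of Keller--Liverani. This is precisely the point at which the earlier argument of \cite{BY} carried out the analogous work by hand.
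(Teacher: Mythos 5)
Your proposal is correct and follows essentially the route the paper itself intends: the paper does not prove Theorem~\ref{prehistory} in the text, but presents it as a consequence of the uniform Lasota--Yorke inequalities \eqref{LaYo} together with the mixed-norm estimate $\|(\LL_t-\LL_s)\varphi\|_{C^1}=O(|t-s|)\,\|\varphi\|_{C^2}$, citing \cite{BY}, whose hands-on two-norm argument is exactly what Keller--Liverani \cite{KeLi} later axiomatized (as you yourself note). Your three steps --- individual quasi-compactness from \eqref{LaYo}, a uniform spectral gap via two-norm perturbation theory, and the resolvent identity $(1-\LL_t)(\rho_t-\rho_s)=(\LL_t-\LL_s)\rho_s$ on the zero-mean subspace --- constitute precisely this scheme, with only routine details (e.g.\ the uniform $C^0$ bound on $\rho_s$ underlying your uniform $C^2$ bound, standardly obtained by applying \eqref{LaYo} to $\LL_s^n 1\to\rho_s$) left implicit.
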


The above result implies that $t\mapsto \mu_t$ is Lipschitz, taking
the $C^1$ topology of the density $\rho_t$ of $\mu_t$ in the image.

Assume now  further (this does not reduce much generality)
that  $v_t=\partial_s f_s|_{s=t}$ can be written
as $v_t=X_t \circ f_t$ with $X_t\in C^2$. Then, we have
linear response:

\begin{theorem}[Linear response formula]\label{toythm}
Viewing $\rho_t\in C^2$ as a
$C^1$ function, the map $t \mapsto \rho_t$ is differentiable, and  we have
$$
\partial_s \rho_s |_{s=t}=
-(1- \LL_t)^{-1} ((X_t \rho_t)') \, ,
\quad \forall t \in (-\epsilon,
\epsilon)\, .
$$
\end{theorem}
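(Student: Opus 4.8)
The plan is to differentiate the fixed-point relation $(1-\LL_t)\rho_t=0$ in the parameter, turning the differentiability question into an identity that can be solved by a resolvent on a suitable invariant hyperplane. The one structural fact that makes this work is that $\LL_s^*$ fixes Lebesgue measure, so $\int \LL_s\psi\,dx=\int\psi\,dx$ for every $s$ and every $\psi$; consequently each $\LL_s$ preserves the closed hyperplane $V=\{\psi\in C^1(S^1):\int\psi\,dx=0\}$. By Theorem~\ref{prehistory}, for $s$ close to $t$ the operator $\LL_s$ on $C^1(S^1)$ (and on $C^2(S^1)$) has $1$ as a simple isolated eigenvalue, with eigenvector $\rho_s$ and spectral projector $P_s\psi=(\int\psi\,dx)\,\rho_s$, the rest of the spectrum lying in a disc of radius $\tau<1$ uniformly in $s$. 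Hence $(1-\LL_s)$ restricts to a boundedly invertible operator on $V$, with inverse $(1-\LL_s)^{-1}$ uniformly bounded for $s$ near $t$.

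Next I would extract the difference quotient directly from the two fixed-point equations. Subtracting $(1-\LL_t)\rho_t=0$ from $(1-\LL_{t+h})\rho_{t+h}=0$ and rearranging gives
\[
(1-\LL_{t+h})(\rho_{t+h}-\rho_t)=(\LL_{t+h}-\LL_t)\rho_t .
\]
Here $\rho_{t+h}-\rho_t\in V$ because $\int\rho_{t+h}\,dx=\int\rho_t\,dx=1$, and the right-hand side lies in $V$ because $\int \LL_s\rho_t\,dx=\int\rho_t\,dx$ is independent of $s$. Inverting on $V$ therefore yields
\[
\frac{\rho_{t+h}-\rho_t}{h}=(1-\LL_{t+h})^{-1}\,\frac{(\LL_{t+h}-\LL_t)\rho_t}{h},
\]
so that differentiability and the stated formula will follow once the numerator converges, as $h\to0$, to the operator derivative $\dot\LL_t\rho_t:=\partial_s(\LL_s\,\rho_t)|_{s=t}$ ($\rho_t$ held fixed), and once the resolvents behave continuously in the parameter.

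To identify $\dot\LL_t\rho_t$ (this is the content of ``Step~1'') I would work in duality rather than differentiate the preimage branches in \eqref{endoop} directly. Starting from the change-of-variables identity $\int\psi\,\LL_s\varphi\,dx=\int(\psi\circ f_s)\,\varphi\,dx$, differentiating in $s$, using $v_s=X_s\circ f_s$ together with $(\psi'\circ f_s)(X_s\circ f_s)=(\psi'X_s)\circ f_s$, invoking duality once more, and finally integrating by parts on the boundaryless $S^1$, I obtain $\int\psi\,\dot\LL_s\varphi\,dx=-\int\psi\,(X_s\,\LL_s\varphi)'\,dx$ for every test function $\psi$, that is $\dot\LL_s\varphi=-(X_s\,\LL_s\varphi)'$. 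Setting $s=t$, $\varphi=\rho_t$ and using $\LL_t\rho_t=\rho_t$ gives $\dot\LL_t\rho_t=-(X_t\rho_t)'$, which indeed lies in $V$ since $\int (X_t\rho_t)'\,dx=0$ on the circle. Passing to the limit in the displayed difference quotient then produces $\partial_t\rho_t=-(1-\LL_t)^{-1}((X_t\rho_t)')$, as claimed.

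The hard part is precisely this passage to the limit, and it is exactly the difficulty flagged in the Introduction: because $(X_t\rho_t)'$ loses one derivative relative to $\rho_t\in C^2$, it is only a $C^1$ object, so the resolvent must be applied on $C^1$ and one cannot upgrade the regularity of the target. Two points then require care. First, $h^{-1}(\LL_{t+h}-\LL_t)\rho_t\to\dot\LL_t\rho_t$ must be established in the $C^1$ topology with an $o(h)$ remainder (the ``stronger claim''); since $\rho_t\in C^2$ this is exactly the regularity needed for the limit to sit in $C^1$, and it follows from the $C^3$-smoothness of $t\mapsto f_t$. Second, and more delicate, I need $(1-\LL_{t+h})^{-1}$ to converge appropriately, which is \emph{not} available from operator-norm continuity of $s\mapsto\LL_s$ on $C^1$ --- at that level only the weaker bound $\|(\LL_{t+h}-\LL_t)\varphi\|_{C^0}=O(h)\|\varphi\|_{C^1}$ holds. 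The way around this is the two-norm structure: the uniform Lasota--Yorke inequalities \eqref{LaYo}, the compact embedding $C^2\hookrightarrow C^1$, and the a priori Lipschitz bound of Theorem~\ref{prehistory} (which keeps $\{(\rho_{t+h}-\rho_t)/h\}$ bounded in $C^1$) together let me control the parameter-dependence of the spectral data and promote the easy $C^0$-convergence of the difference quotient to genuine $C^1$-convergence, thereby proving differentiability in the $C^1$ topology.
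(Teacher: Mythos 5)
Your overall architecture is sound and genuinely different from the paper's. Where the paper represents $\rho_s$ by a Kato contour integral $\frac{1}{2i\pi}\oint_\gamma(z-\LL_s)^{-1}\rho_t\,dz$ and differentiates the resolvent (its Steps~1--2), you subtract the two fixed-point equations and invert $1-\LL_{t+h}$ on the Lebesgue-mean-zero hyperplane $V$; and where the paper computes the operator derivative and the identity $\MM_t\rho_t=-(X_t\rho_t)'$ by explicit inverse-branch calculus (its Steps~1 and~3), you obtain $\partial_s\LL_s\varphi=-(X_s\,\LL_s\varphi)'$ from duality and integration by parts on $S^1$; the two formulas agree, and your remark that both sides of the difference identity lie in $V$ is exactly the paper's cancellation $\Pi_t\MM_t\rho_t=0$, so the spectral projector never needs to appear. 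What each approach buys: yours avoids contour integrals and Kato theory and is more elementary; the paper's arrangement keeps every resolvent-perturbation step applied to $C^2$ data (resolvents are always evaluated at $\rho_t\in C^2$), deferring the loss of one derivative to a purely algebraic final step, which is what makes its limit $s\to t$ comparatively painless.

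There is, however, a genuine soft spot exactly at the point you call the hard part. Write $D_h=(\rho_{t+h}-\rho_t)/h$, $\eta_h=h^{-1}(\LL_{t+h}-\LL_t)\rho_t$, $\eta=-(X_t\rho_t)'$, and $D=(1-\LL_t)^{-1}\eta$. On $V$ one has
\begin{equation*}
D_h-D=(1-\LL_{t+h})^{-1}\bigl[(\eta_h-\eta)+(\LL_{t+h}-\LL_t)D\bigr]\, .
\end{equation*}
The first summand is handled by your Step-1-type estimate together with a bound on $(1-\LL_{t+h})^{-1}|_V$ in $C^1$ uniform in $h$ (which, incidentally, does not follow from the location of the spectrum in Theorem~\ref{prehistory} alone --- in a Banach space a spectral gap does not bound the resolvent --- but does follow from the uniform inequalities \eqref{LaYo} via Keller--Liverani). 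The problem is the second summand: $D$ is only $C^1$, so the tools you list give only $\|(\LL_{t+h}-\LL_t)D\|_{C^0}=O(h)$ together with an $O(1)$ bound in $C^1$, and feeding a $C^0$-small but merely $C^1$-bounded function into $(1-\LL_{t+h})^{-1}$ does not produce $C^1$-smallness. Your proposed repair does not work as stated: Lasota--Yorke, the compact embedding, and the a priori Lipschitz bound yield only $C^0$-convergence of $D_h$, and you cannot run the Keller--Liverani argument one level up, on the pair $(C^2,C^1)$, because $\eta\notin C^2$ --- precisely the loss of derivative you yourself flagged. The missing ingredient is strong (pointwise, not operator-norm) continuity of $s\mapsto\LL_s$ on $C^1$, namely $\|(\LL_{t+h}-\LL_t)\psi\|_{C^1}\to 0$ for each \emph{fixed} $\psi\in C^1$, which holds here for elementary reasons (uniform continuity of $\psi'$ together with $C^1$-convergence of the inverse branches and of $1/|f'_{t+h}|$), applied to $\psi=D$. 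With that lemma added, both summands are $o(1)$ in $C^1$ and your proof closes; without it, the last paragraph of your proposal is an assertion rather than an argument.
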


Note that  $X_t \rho_t$ is $C^2$ by assumption. Since integration by parts
on the boundaryless manifold $S^1$ gives $\int (X_t \rho_t)'\, dx=0$,
the residue of the simple pole  at $z=1$ of the resolvent $(z-\LL_t)^{-1}$
(acting on $C^1(S^1)$) vanishes at $(X_t \rho_t)'$.

\smallskip

We now prove Theorem~\ref{toythm}, assuming Theorem~\ref{prehistory}:

\begin{proof}[Proof of Theorem \ref{toythm}]
The proof consists in three steps, to be proved at the end:

\item \textbf{Step 1:}
Considering $\LL_t$ as a bounded operator from
$C^2(S^1)$ to $C^1(S^1)$, we claim that the map $t\mapsto \LL_t$ is differentiable, and that,
 for every $t\in (-\epsilon,\epsilon)$, we have
$$
\MM_t (\varphi):= \partial_s \LL_s (\varphi)|_{s=t}=
- X_t' \LL_t(\varphi) - X_t \LL_t \biggl (\frac{\varphi'}{f'} \biggr ) +
X_t \LL_t \biggl (\frac{\varphi f''}{(f')^2} \biggr ) \, .
$$
(This step will use $v_t =X_t \circ f_t$.)

\item \textbf{Step 2:}
Let $\Pi_t (\varphi)= \rho_t \cdot \int \varphi\, dx$ be the rank one
projector for the eigenvalue $1$ of $\LL_t$ acting on $C^1(S^1)$.
Then, for every $t \in (-\epsilon,\epsilon)$, we have
$$
\partial_s \rho_s|_{s=t}=(1-\LL_t)^{-1} (1-\Pi_t) \MM_t (\rho_t) \, .
$$
(Note that $\rho_t\in C^2$, but $\MM_t$ is an operator from $C^2(S^1)$
to $C^1(S^1)$.)

\item \textbf{Step 3:}
For every $t\in (-\epsilon,\epsilon)$, we have 
$$
(1-\LL_t)^{-1} [(1-\Pi_t) \MM_t (\rho_t)]= -
(1- \LL_t)^{-1} ((X_t \rho_t)')  \, .
$$

\smallskip

Theorem~\ref{toythm} follows from putting together Steps 2 and 3. To conclude,
we 
justify the three steps:

\smallskip
\noindent \textbf{Proof of Step 1:}
We must show that the operators defined for
$s\ne t$ by 
$$\RR_{t,s}:=
\frac{\LL_t - \LL_s}{t-s} - \MM_t
$$
satisfy
$\lim_{s \to t} \|\RR_{t,s}\|_{C^2(S^1) \to C^1(S^1)}=0$.
We start by observing that the number of branches of $f_s$ 
(which is just its degree) does not depend
on $s$. So for any fixed $t$ and any $x$, each inverse branch for
$f_s^{-1}(x)$, for $s$ close enough to $t$, can be paired with a well-defined
nearby inverse
branch $f_t^{-1}(x)$. For two such paired branches, we get, since $\varphi\in C^2$,
each $f_s$ is $C^3$, and $t\mapsto f_t$ is $C^2$, that
\begin{align*}
&\frac{\varphi(f_t^{-1}(x))}{|f'_t(f_t^{-1}(x))|}-
\frac{\varphi(f_s^{-1}(x))}{|f'_s(f_s^{-1}(x))|}=O((t-s)^2)
-(t-s) X'_t(x) \frac{\varphi(f_t^{-1}(x))}{|f'_t(f_t^{-1}(x))|}\\
&\qquad\qquad\qquad
-(t-s)X_t(x) \biggl [
\frac{\varphi'(f^{-1}_t(x)}{f'_t(f^{-1}_t(x))|f'_t(f^{-1}_t(x))|}
-\frac{\varphi(f^{-1}_t(x) f''_t(f^{-1}_t(x))}{(f'_t(f^{-1}_t(x)))^2 |f'_t(f^{-1}_t(x))|}
\biggr ]\, .
\end{align*}

\smallskip
\noindent \textbf{Proof of Step 2:}
Fix $t$. By Theorem~\ref{prehistory}, we can find a positively
oriented closed  curve $\gamma$ in the complex plane so that, for any
$s$ close to $t$, the simple eigenvalue
$1$ of $\LL_s$ is contained in the domain bounded by $\gamma$, and
no other element of the spectrum of $\LL_s$ acting on $C^2(S^1)$ lies in this domain.
Step 2 then uses classical perturbation theory for isolated simple
eigenvalues of bounded linear operators on Banach spaces
(see \cite{Kat}, e.g., see also \cite{KeLi} for the use of similar
ideas to get spectral stability), which tells us that, for any $\varphi \in C^2$
so that $\Pi_s(\varphi)=\int \varphi\, dx=1$, we have
\begin{equation}\label{aswemay}
\rho_s =\frac{1}{2i\pi}
\oint_\gamma (z-\LL_s)^{-1} \varphi(z) \, dz \, .
\end{equation}
(We used that $\int \rho_s \, dx=1$ for all $s$ and
$\LL_s^*(dx)=dx$.)
Next, for $z\in \gamma$,
we have the identity
$$
(z-\LL_t)^{-1}- (z- \LL_s)^{-1}
=(z-\LL_t)^{-1}(\LL_t-\LL_s) (z- \LL_s)^{-1}\, ,
$$
where we view $(z- \LL_s)^{-1}$ as acting on $C^2(S^1)$, the difference
$(\LL_t-\LL_s)$ as an operator from $C^2(S^1)$ to
$C^1$, and  $(z-\LL_t)^{-1}$ as acting on $C^1(S^1)$.
Letting $s$ tend to $t$, and recalling Step 1, we have proved
$$
\partial_s (z-\LL_s)^{-1}|_{s=t}=
(z- \LL_t)^{-1} \MM_t (z- \LL_t)^{-1} \, .
$$
Finally, taking (as we may) $\varphi=\rho_t\in C^2$ in \eqref{aswemay},
\begin{align*}
\partial_s \rho_s|_{s=t} &=\frac{1}{2i\pi}
\oint_\gamma  (z- \LL_t)^{-1} \MM_t (z- \LL_t)^{-1}\rho_t(z) \, dz \\
&=\frac{1}{2i\pi}
\oint_\gamma  (z- \LL_t)^{-1}  \frac{\MM_t(\rho_t(z))}{z-1} \, dz
\, .
\end{align*}
An easy residue computation completes Step 2.

\smallskip
\noindent \textbf{Proof of Step 3:}
It suffices to show
$
\MM_t \rho_t - \Pi_t \MM_t \rho_t = -(X_t \rho_t)' 
$.
Step 1 implies
$$
\MM_t \rho_t = -X_t' \rho_t -X_t 
 \LL_t \biggl ( \frac{\rho_t'}{f_t'} - \frac{\rho_tf''_t}{(f'_t)^2}\biggr )  \, .
$$
Now we use that $\rho_t'=(\LL_t \rho_t)'\in C^1$ and
$$
(\LL_t \varphi)'(x)=
\sum_{f_t(y)=x} \frac{\varphi'(y)}{|f'_t(y)|}\frac{1}{f'_t(y)}
-\sum_{f_t(y)=x} \frac{\varphi(y)f''_t(y)}{|f'_t(y)|(f'_t(y))^2}\, , 
$$
to see that
$$
\LL_t \biggl ( \frac{\rho_t'}{f_t'} - \frac{\rho_tf''_t}{(f'_t)^2}\biggr ) 
=\rho_t' \, .
$$
We have shown that $\MM_t \rho_t= - (X_t \rho_t)'$, so that
$\int \MM_t \rho_t \, dx=0$ and $\Pi_t  \MM_t \rho_t=0$, ending the proof of Step 3, and thus of the
theorem.
\end{proof}

\section{Linear response}
\label{LR}

\subsection{Smooth hyperbolic dynamics (structural stability)}
\label{STST}

A $C^1$ diffeomorphism $f:M \to M$ is called \emph{Anosov} if there exist
a $Df$-invariant continuous splitting $TM=E^u\oplus E^s$ of the tangent bundle
and  constants $C>0$ and $\lambda >1$ so that, for any $x \in M$, all $n\ge 1$,
all $v\in E^s(x)$, and all $w\in E^u(x)$, 
\begin{equation}\label{Anosovv}
\| Df^n_x(v)\|\le C \lambda^{-n} \|v\|\, ,
\,\, \| Df^{-n}_x(w)\|\le C \lambda^{-n} \|w\| \, .
\end{equation}
Thus, Anosov diffeomorphisms are generalizations of
the linear hyperbolic map 
\begin{equation}\label{catmap}A_0=\left (\begin{matrix}1&1\\1&2 \end{matrix}\right )
\end{equation}
on the two-torus. Indeed
(we refer  to \cite{KH}, e.g., for the basics of hyperbolic dynamics), 
a small smooth perturbation of $A_0$ is an Anosov
diffeomorphism. 
Anosov diffeomorphisms $f$ admit (finitely many) SRB
measures as soon as they are $C^{1+\epsilon}$, and the SRB
measure is unique if the diffeomorphism is transitive.
\footnote{Transitivity is automatic if $f$ is volume
preserving. It is conjectured that all Anosov diffeomorphisms on
connected compact manifolds are transitive.}
For \emph{Axiom A diffeomorphisms,}  hyperbolicity
(i.e., the existence of the continuous splitting $E^u \oplus E^s$)
is assumed only at $T_xM$ for points
$x$ in the nonwandering set $\Omega$; in addition,
periodic orbits are assumed
to be dense in $\Omega$. Smale's horseshoe is a famous Axiom A diffeomorphism, but SRB
measures exist in general
only for Axiom A \emph{attractors,} such as the solenoid.
(Anosov diffeomorphisms are special cases of Axiom A
attractors.)
An important property of Axiom A diffeomorphisms is \emph{structural
stability:} If $f_0$ is an Axiom A attractor, and $f_t$ is close
to $f_0$ (in the $C^1$ topology), then $f_t$ is also Axiom A, and, in addition
$f_0$ is topologically conjugated to $f_t$, i.e., there is a one-parameter
family
\footnote{The map
$t\mapsto h_t$ is smooth and its derivative $\alpha_t$ solves
the twisted cohomological equation \eqref{TCE},
see also \cite{B2} and references therein.} of homeomorphisms $h_t$ so that $f_t=h_t \circ f_0 \circ h_t^{-1}$.

Linear response holds for smooth hyperbolic systems: After pioneering results
of de la Llave et al.\,\,\cite{LMM} and Katok et al.\,\,\cite{KKPW},  Ruelle proved the
following landmark theorem (\cite{R0, R1}, see also \cite{Ji}):

\begin{theorem}[Linear response for smooth hyperbolic systems]\label{ruellethm}
Let $M$ be a compact Riemann manifold.
Let $t\mapsto f_t$ be a $C^3$ map from $(-\epsilon,\epsilon)$
to $C^3$ diffeomorphisms $f_t:M\to M$.
Assume that each $f_t$ is a topologically mixing Axiom A attractor, and
let $\mu_t$ be its unique SRB probability measure.
Then for any $\varphi \in C^2$, the map
$
t \mapsto \int \varphi\, d\mu_t
$
is differentiable on $(-\epsilon,\epsilon)$. In addition, setting
$X_t =\partial f_s |_{s=t}\circ f_t^{-1}$, we have
\begin{equation}\label{Ruf}
\partial_s \int \varphi \, d\mu_s |_{s=t}
= \sum_{j=0}^\infty \int \langle \gradd (\varphi \circ f_t^j), X_t \rangle
\, d\mu_t\, ,
\end{equation}
where the series converges (exponentially).
\end{theorem}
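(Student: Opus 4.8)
The plan is to transpose the transfer-operator proof of Theorem~\ref{toythm} to the hyperbolic setting, replacing the pair $C^2(S^1)\hookrightarrow C^1(S^1)$ by a pair of \emph{anisotropic} Banach spaces $\BB\hookrightarrow\BB_w$ of distributions that are regular along the unstable directions $E^u$ and merely distributional along the stable directions $E^s$, the embedding being compact. On such spaces the geometric transfer operator
$$
\LL_t\psi=\frac{\psi\circ f_t^{-1}}{|\det Df_t\circ f_t^{-1}|}
$$
satisfies Lasota--Yorke inequalities analogous to \eqref{LaYo}, hence is quasi-compact; topological mixing forces the peripheral spectrum to reduce to a single simple eigenvalue at $1$, so $\LL_t$ enjoys a spectral gap exactly as in Theorem~\ref{prehistory}. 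The fixed distribution is the SRB measure $\mu_t$ (normalised by $\mu_t(1)=1$), while, since $\int\LL_t\psi\,dx=\int\psi\,dx$, the left eigenvector is Lebesgue; the spectral projector is therefore $\Pi_t\psi=\mu_t\int\psi\,dx$, the precise analogue of Step~2 of Theorem~\ref{toythm}.

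Next I would compute the derivative $\MM_t:=\partial_s\LL_s|_{s=t}$, viewed as a bounded operator $\BB\to\BB_w$. Writing the perturbation in the image point, $v_t=X_t\circ f_t$, the derivative of a pushforward is governed by the transport (continuity) equation, giving
$$
\MM_t\psi=-\divv\bigl(X_t\,\LL_t\psi\bigr)\, ,
$$
which specialises in one dimension to the identity $\MM_t\rho_t=-(X_t\rho_t)'$ of Step~3 in the toy model. Running the Kato perturbation argument of Theorem~\ref{toythm} (differentiating the Riesz projection $\tfrac{1}{2i\pi}\oint_\gamma(z-\LL_s)^{-1}\,dz$ and invoking the spectral gap) yields $\partial_s\mu_s|_{s=t}=(1-\LL_t)^{-1}(1-\Pi_t)\MM_t\mu_t$. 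Since $\LL_t\mu_t=\mu_t$ gives $\MM_t\mu_t=-\divv(X_t\mu_t)$, and since $\int\divv(X_t\mu_t)\,dx=0$ on the boundaryless $M$ kills the projector term, one lands on the metaformula \eqref{magic'},
$$
\partial_s\mu_s|_{s=t}=-(1-\LL_t)^{-1}\divv(X_t\mu_t)\, ,
$$
whence $t\mapsto\int\varphi\,d\mu_t$ is differentiable for $\varphi\in C^2$.

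To reach the explicit series \eqref{Ruf}, I would expand the resolvent as a Neumann series on the range of $1-\Pi_t$, where the spectral gap forces geometric convergence: since $\Pi_t\divv(X_t\mu_t)=0$,
$$
(1-\LL_t)^{-1}\divv(X_t\mu_t)=\sum_{j=0}^\infty\LL_t^j\,\divv(X_t\mu_t)\, .
$$
Pairing with $\varphi$, using that the $L^2(dx)$-adjoint of $\LL_t$ is the Koopman operator $\varphi\mapsto\varphi\circ f_t$, and integrating by parts against the distribution $\mu_t$, each term becomes $\langle\divv(X_t\mu_t),\varphi\circ f_t^j\rangle=-\int\langle X_t,\gradd(\varphi\circ f_t^j)\rangle\,d\mu_t$; the overall sign then produces \eqref{Ruf}, with the exponential convergence inherited from the gap.

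The main obstacle is analytic rather than algebraic. First, the anisotropic spaces must be built so that $\LL_t$ is not merely bounded but that $t\mapsto\LL_t$ is \emph{differentiable} from the strong space $\BB$ into the weak space $\BB_w$, the loss of one derivative being unavoidable exactly as in Step~1 of the toy model; controlling composition with $f_t^{-1}$ in anisotropic norms, and identifying the fixed distribution with the genuine SRB measure $\lim_n(f_t^n)_*\mathrm{Leb}$, is where the $C^3$ regularity of both $x\mapsto f_t(x)$ and $t\mapsto f_t$ is spent. Second, and most delicately, the resolvent in \eqref{magic'} is applied to $\divv(X_t\mu_t)$, which \emph{differentiates} the SRB distribution: as $\mu_t$ is smooth only along $E^u$ and distributional along $E^s$, one must verify that $\divv(X_t\mu_t)$ still lies in $\BB$, i.e.\ that the extra stable-direction derivatives remain within the distributional regularity the space allows. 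This is precisely the point flagged after \eqref{magic'}, and it is what makes the statement genuinely harder than the expanding case --- it is also the mechanism by which homoclinic tangencies later destroy linear response. An alternative route, the one followed by Ruelle in \cite{R0}, bypasses the anisotropic spaces altogether by exploiting structural stability, writing $f_t=h_t\circ f_0\circ h_t^{-1}$ and differentiating through the conjugacy via the twisted cohomological equation \eqref{TCE} solved by $\alpha_t=\partial_sh_s|_{s=t}\circ h_t^{-1}$.
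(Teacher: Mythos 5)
Your proposal follows exactly the route the paper itself indicates for this theorem: the modern anisotropic-Banach-space approach of Gou\"ezel--Liverani \cite{GL1, GL2}, in which the transfer operators $\LL_t$ have a uniform spectral gap on a space $\BB$ of anisotropic distributions containing both $\mu_t$ and its derivative, so that the metaformula \eqref{magic'} and the Kato-type perturbation argument of Theorem~\ref{toythm} go through rigorously. The paper offers only this sketch (deferring details to \cite{GL1, GL2} and to Ruelle's original symbolic-dynamics proof \cite{R0, R1}), and your proposal matches it in structure and in the identification of the key analytic point, namely that $\divv(X_t\mu_t)$ must still lie in a space on which the resolvent has a spectral gap.
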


In this situation, one shows that the  susceptibility function \eqref{SF} is holomorphic
in a disc of radius strictly bigger than one.

Ruelle exploited symbolic dynamics in \cite{R0, R1}. For
a more modern approach, using anisotropic Banach spaces,
see the work of Gou\"ezel and Liverani
(\cite[Thm 2.8]{GL1} for Anosov, and \cite[Prop. 8.1]{GL2} for Axiom~A).
The modern approach is much simpler, since the transfer operators
$\LL_t$ of the diffeomorphisms $f_t$ all have a uniform spectral
gap on the same Banach space $\BB$ of anisotropic
distributions, which contains, not only the SRB measure
$\mu_t$, but also its ``derivative.''  The ``metaformula'' \eqref{magic'}
can then be easily justified rigorously.

\subsection{Mild bifurcations}
\label{DoLR}

In \S~4 we shall see examples where the breakdown of structural
stability (the presence of bifurcations in the family $f_t$) is mirrored by  a breakdown of linear response.
However, structural stability is \emph{not} necessary to
obtain linear response -- and neither is the  spectral gap
\footnote{See  the work of Hairer and Majda \cite{HM}.}
of the transfer operator $\LL_t$.  We briefly describe a
result of Dolgopyat \cite{Do} on a class of partially hyperbolic maps.
We consider
partially hyperbolic  diffeomorphisms $f:M\to M$
on a smooth compact manifold $M$, i.e.,   we assume
the tangent bundle  is decomposed into invariant
bundles $E^c\oplus E^u\oplus E^s$, where
$E^u$ and $E^s$ are both nontrivial and enjoy \eqref{Anosovv}.
A  partially hyperbolic diffeomorphism $f$ is  called
an \emph{Anosov element of a standard abelian Anosov action} if the
central bundle $E^c$ of $f$ is tangent to the orbits of a 
$C^\infty$ action $g_t$ of $\real^d$ so that
 $f g_t=g_t f$ (see \cite{KS1,KS2}). 
 Assume further that $f$ admits a unique physical (SRB)
 measure $\mu$, whose
 basin has total Lebesgue measure. The action is called \emph{rapidly mixing}
if there exists 
and a ($g_t$-admissible)
class of smooth functions $\FF$, and, for any $m\ge 1$, there exists $C \ge 1$
so that, for all subsets $S$ in a suitable class
of unstable leaves of $f$,  any $\varphi\in \FF$, and for any smooth probability
density $\psi$ on $S$, we have 
$$
|\int_S (\varphi\circ f^n)(x) \psi(x)\, dx-
\int \varphi\, d\mu| \le C \|\varphi\|_\FF \|\psi\| n^{-m}\, . 
$$
We refer to \cite{Do} for precise definitions of the objects
above and of $u$-Gibbs states, we just recall here
that SRB measures are $u$-Gibbs states. Dolgopyat's result follows:

\begin{theorem}[Linear response for rapidly mixing abelian Anosov actions \cite{Do}]
\label{DoTh}
Let $f$ be a $C^\infty$ Anosov element of a standard abelian Anosov action so that
$f$ has a unique SRB measure and is rapidly mixing. Then, for
any $C^\infty$ one-parameter family of diffeomorphisms $t\mapsto f_t$ 
through $f_0=f$, choosing for each $t$ a $u$-Gibbs state $\nu_t$ for $f_t$
(which can be the SRB measure if it exists), we have that 
$\int \varphi\, d\nu_t$ is differentiable at $t=0$ for any $\varphi\in C^\infty$,
and the linear
response formula \eqref{Ruf} holds. (See \cite[p. 405]{Do} for the linear response formula.)
\end{theorem}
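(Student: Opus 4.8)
The plan is to establish the linear response formula \eqref{Ruf} by hand, rather than through the analytic perturbation argument of the toy model, which is unavailable here: rapid mixing supplies only polynomial decay, and the transfer operator of a partially hyperbolic map has no spectral gap on any reasonable space. I would split the work into two stages. First I would show that the series on the right-hand side of \eqref{Ruf} converges; second, that it actually computes $\partial_t\int\varphi\, d\nu_t|_{t=0}$.

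For the convergence stage, I would rewrite each term as
\begin{equation*}
\int \langle \gradd(\varphi\circ f^j), X_0\rangle\, d\mu = \int \langle (\gradd\varphi)\circ f^j, Df^j X_0\rangle\, d\mu
\end{equation*}
and decompose $X_0 = X_0^s + X_0^c + X_0^u$ according to the splitting $E^s\oplus E^c\oplus E^u$. The three contributions are controlled by three distinct mechanisms. The stable contribution decays exponentially, since $\|Df^j X_0^s\|\le C\lambda^{-j}\|X_0^s\|$ by \eqref{Anosovv}. For the unstable contribution $Df^j X_0^u$ grows exponentially, so I would integrate by parts along the unstable leaves --- legitimate because the $u$-Gibbs state $\mu$ has smooth conditional densities there --- converting the term into $\int (\varphi\circ f^j)\, w\, d\mu$ with $w$ built from $X_0^u$ and the logarithmic derivatives of the conditional densities; rapid mixing then bounds it by $C\|w\|_\FF\, j^{-m}$. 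For the central contribution, the key is that $f$ commutes with the action $g_s$, so $Df$ carries each central generating field to itself along orbits; hence $Df^j X_0^c$ stays bounded with $j$-independent coefficients, and the term collapses to a sum of correlations $\int a_i\, (V_i\varphi)\circ f^j\, d\mu$, where the $V_i$ are the generators of the action. Because the unique SRB measure is necessarily $g_s$-invariant, each $V_i\varphi$ has zero $\mu$-average, so rapid mixing again yields an $O(j^{-m})$ bound. Taking $m\ge 2$ makes \eqref{Ruf} absolutely convergent.

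For the identification stage, I would realize $\nu_t$ as a weak limit of the normalized pushforwards $(f_t^n)_*(\psi\, dx)$ of a fixed smooth probability density $\psi$ carried by an unstable leaf, differentiate these finite-time quantities in $t$ at $t=0$ (a chain-rule computation producing exactly the partial sums of \eqref{Ruf}), and pass to the limit. The exchange of the limit $n\to\infty$ with the $t$-derivative is justified using the uniform-in-$t$ versions of the three decay estimates above.

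The main obstacle is precisely this exchange of limits. With no spectral gap the approximation of $\nu_t$ is only polynomially fast, and the $u$-Gibbs state $\nu_t$ need not be unique for $t\ne 0$, so one must check that the rapid-mixing bounds persist uniformly under perturbation and that the resulting derivative does not depend on the admissible choices (the leaf, the density $\psi$, and the selected $u$-Gibbs state). The most delicate point sits in the central direction: taming the neutral bundle requires simultaneously the algebraic rigidity of the commuting action (to keep $Df^j$ bounded on $E^c$ and to know that $g_s$ preserves $\mu$) and the analytic input of rapid mixing, and it is exactly this direction that would be uncontrollable for a general partially hyperbolic family lacking the abelian-action structure.
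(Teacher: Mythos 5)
Your convergence stage is sound and close in spirit to why the series in \eqref{Ruf} converges: exponential contraction handles $E^s$, integration by parts along unstable leaves plus rapid mixing handles $E^u$, and the commutation relation $Df\, V_i = V_i\circ f$ for the generators of the action (together with the $g_s$-invariance of $\mu$, which indeed follows from uniqueness of the SRB measure) handles $E^c$. The genuine gap is in the identification stage, and it is not merely the technical ``main obstacle'' you flag --- your plan for it cannot be carried out as stated. The uniform-in-$t$ versions of your three decay estimates would require rapid mixing for the perturbed maps $f_t$, uniformly in $t$. But rapid mixing is a hypothesis on $f_0$ only; it is not an open condition (for time-one maps of Anosov flows it is tied to diophantine-type conditions on periodic data), and the maps $f_t$ may fail to be mixing, or even to possess a unique SRB measure at all --- which is precisely why the theorem is stated for arbitrary choices of $u$-Gibbs states $\nu_t$. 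Relatedly, realizing $\nu_t$ as the limit of pushforwards of one fixed leaf density produces at best one particular selection of $u$-Gibbs states; the content of the theorem is that \emph{every} selection $t\mapsto \nu_t$ is differentiable at $t=0$ with the same derivative, and nothing in your scheme addresses the independence of the choice.

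The missing idea is the one the paper itself points to: Dolgopyat's shadowing argument. Because the family stays partially hyperbolic, structural stability can only break down in the central direction, so most orbits of $f_t$ can be shadowed by orbits of $f_0$ composed with motions $g_\sigma$ of the abelian action. This reduces every equidistribution estimate needed for the perturbed dynamics --- both to compare $(f_t^n)_*\ell$ with $(f_0^n)_*\ell$ for leaf measures $\ell$, and to show that all $u$-Gibbs states of $f_t$ agree to first order in $t$ --- to statements about the single map $f_0$, for which rapid mixing is actually assumed. Without this reduction (or some substitute giving uniform mixing-type control of the perturbed family), the exchange of the limit $n\to\infty$ with $\partial_t$ at the heart of your identification stage is unjustified; and this is exactly the point where, as you correctly sense, a proof for a general partially hyperbolic family lacking the abelian-action structure would collapse.
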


\noindent Besides giving a new proof in the Anosov case, applications of Theorem~\ref{DoTh} include:
\begin{itemize}
\item
time-one maps $f$ of Anosov flows, which are generically rapidly mixing;
\item
toral extensions $f$ of Anosov diffeomorphisms $F$ defined by
$$
f(x,y)=(F(x), y+\omega(x))\, ,\quad x \in M\, ,\, y \in \TT^d\, ,
\,\omega \in C^\infty(M, \TT^d)\, ,
$$ 
which are generically rapidly mixing (under a diophantine condition).
\end{itemize}

It seems important here that structural stability may only break
down in the central direction. This allows Dolgopyat to use
rapid mixing to prove that
most orbits can be shadowed, a key feature of his argument.

\section{Or Else}
\label{OE}

The results stated in \S~3.1 gave at the time some
hope \cite{R2} that linear response could hold (at least
in the sense of Whitney) for a variety of nonuniformly
hyperbolic systems. In the present section we shall state some results obtained
since 2007 which indicate that the situation is not
so simple. We would like to mention that numerical experiments
and physical arguments already gave a hint that something
could go wrong (see  \cite{Er}, e.g., for fractal transport, see \cite{KHK}).

\subsection{Piecewise expanding interval maps}
\label{pw}

Piecewise expanding maps can be viewed as a toy model for the smooth
unimodal maps to be discussed in \S~\ref{smooth}.
The setting is the following: We let $I=[-1,1]$ be a compact interval, and consider continuous
maps $f :I \to I$ with $f(-1)=f(1)=-1$, and so that 
$f|_{[-1,0]}$ and $f|_{[0,1]}$ are $C^2$, with $\inf_{x \ne c} |f'(x)|\ge \lambda >1$.
Such a map is called a \emph{piecewise expanding unimodal map}
(for $\lambda$).
Lasota and Yorke   \cite{LY} proved in the 70's that such a map
posesses a unique absolutely continuous invariant probability
measure $\mu=\rho\, dx$, which is always ergodic. In fact, the density
$\rho$ is of bounded variation.
If $\mu$ is mixing,
we have exponential decay of correlations for smooth
observables, which can be proved by using the
spectral gap of the transfer operator $\LL_t$ defined by \eqref{endoop}  acting on the Banach space
$BV$ of functions of bounded variation, see e.g. \cite{B0}.
We set $c=c_0=0$, and we put
$c_k=f^k(c)$ for $k\ge 1$.

Consider now a $C^1$ path $t\mapsto f_t $, with 
each $f_t$ a piecewise expanding unimodal map. Assume in addition that
$f_0=f$ is
topologically mixing  on $[c_2,c_1]$ (then $\mu=\mu_0$ is mixing), that $c_1<1$, and that
$c$ is not a periodic point of $f_0$ (this implies that $f_0$
is stably mixing, i.e., small perturbations of $f_0$ remain mixing).
Then, applying \cite{LY},
each $f_t$ admits a unique SRB measure $\mu_t=\rho_t \, dx$ (and each transfer
operator $\LL_t$ has a spectral gap on $BV$, the corresponding estimates
are in fact uniform).
Keller \cite{Ke} proved that the map
$$
t \mapsto \rho_t \in L^1(dx)
$$
is H\"older for every  exponent $\eta <1$. In fact, Keller showed
\begin{equation}\label{Kelcond}
\|\rho_t - \rho_s\|_{L^1}\le C  |t-s| |\log |t-s|  |\, .
\end{equation}
>From now on, we assume that each $f_t$ is piecewise $C^3$, that
the map $t\mapsto f_t$ is $C^2$, and that $v=\partial_t f_t|_{t=0}= X \circ f$.
An example is given by taking the \emph{tent maps} 
\begin{equation}\label{tentt}
\begin{split}
f_t(x)&=a+t - (a+t+1)x \, ,\mbox{ if } x \in [0,1]\, ,\\
f_t(x)&=a+t +(a+t+1)x \, ,  \mbox{ if } x \in[-1,0]\, ,
\end{split}
\end{equation}
choosing $0<a<1$ so that
$0$ is not periodic for $f_a$ and so that $f_a$ is
mixing (note that
$X_0(x)=(a+1)^{-1}(x+1)$). Observe that structural stability is strongly violated
here: $f_t$ is topologically conjugated to $f_s$ only if $s=t$
\cite{dMvS}. In other words, the family $f_t$ of tent maps undergoes strong bifurcations.

 A piecewise expanding map is called \emph{Markov} if $c$
is preperiodic, that is, if there exists $j\ge 2$
so that $c_j$ is a periodic point: $f^p(c_j)=c_j$
for some $p\ge 1$. (In this case, one can show that the invariant density
is piecewise smooth, and the susceptibility function is meromorphic.)
A Markov map is mixing if its transition matrix is aperiodic,
stable mixing then allows to construct easily  mixing tent maps. 

It turns out that Keller's upper bound
\eqref{Kelcond} is optimal, \emph{linear response fails:}

\begin{theorem}[Mazzolena \cite{MM}, Baladi \cite{B1}]\label{counter}
There exist a Markov piecewise expanding interval map $f_0$,
a path $f_t$ through $f_0$,  with a $C^\infty$ observable
$\varphi$, a constant $C>0$, and a sequence $t_n \to 0$, so that
$$
|\int \varphi\, d\mu_{t_n} - \int \varphi\, d \mu_0|\ge C |t_n| |\log |t_n||\, ,
\quad \forall n\, .
$$
\end{theorem}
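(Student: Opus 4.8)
The plan is to prove that Keller's modulus \eqref{Kelcond} is saturated from below, which is incompatible with the Lipschitz (a fortiori differentiable) behaviour that linear response would demand. The mechanism is entirely geometric: the invariant density of a piecewise expanding unimodal map is piecewise smooth, with jump discontinuities located exactly at the postcritical points $c_k(t):=f_t^k(0)$, and it is the \emph{motion} of these jumps under the perturbation that produces the logarithmic loss. I would work with the tent family \eqref{tentt}, for which the densities are piecewise constant and the bookkeeping is cleanest. The first step is to record the explicit kneading series for the density. Writing $s_t:=a+t+1$ for the common slope (so $s_0=a+1=\lambda$), one has, up to a positive normalising constant $Z_t$,
\[
\rho_t \;=\; Z_t^{-1}\sum_{k\ge 0}\sigma_k\, s_t^{-k}\, \mathbf 1_{H_k(t)}\, ,
\]
where $\sigma_k\in\{\pm1\}$ is read off the kneading sequence and each $H_k(t)$ is a half-interval whose single moving endpoint is $c_{k+1}(t)$. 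The features I need are: (i) the jump of $\rho_t$ at $c_k(t)$ has height $\asymp s_t^{-k}\asymp\lambda^{-k}$; (ii) differentiating $c_k(t)=f_t^k(0)$ in $t$ gives $\partial_t c_k(t)=b_k(t)\,s_t^{\,k}$ with $b_k(t)$ bounded; and (iii) choosing $f_0$ to be \emph{Markov}, i.e.\ with $(c_k(0))_{k\ge k_0}$ genuinely periodic of some period $p$, makes the triple $(\sigma_k,\varphi(c_k(0)),b_k(0))$ eventually $p$-periodic in $k$.

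Next I would difference $\int\varphi\,d\mu_t=Z_t^{-1}\sum_k\sigma_k s_t^{-k}\int_{H_k(t)}\varphi\,dx$ at $t$ and at $0$ and isolate the dominant contribution, which comes from the displacement of the endpoints $c_{k+1}(t)$. For the $k$-th term this contribution is $\asymp\sigma_k\, s_t^{-k}\,\varphi(c_{k+1}(0))\,(c_{k+1}(t)-c_{k+1}(0))$, whereas the contributions of $\partial_t(s_t^{-k})=-k\,s_t^{-k-1}$ and of $\partial_t Z_t$ produce convergent series $\sum_k k\,\lambda^{-k}$ and hence only $O(|t|)$, to be absorbed into a remainder by a suitable choice of $\varphi$. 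Because $|c_{k+1}(t)-c_{k+1}(0)|\le 2$ always, I would split at the cutoff $N(t):=\lfloor|\log|t||/\log\lambda\rfloor$: for $k\le N(t)$ the displacement is still in the linear regime, $c_{k+1}(t)-c_{k+1}(0)\approx b_{k+1}(0)\,\lambda^{\,k+1}\,t$, so the coefficient $\lambda^{-k}$ cancels the growth and the $k$-th term contributes $\approx\sigma_k\varphi(c_{k+1}(0))b_{k+1}(0)\,t$, an $O(|t|)$ quantity with a definite sign; for $k>N(t)$ the displacement is $O(1)$ and the tail is bounded by $\sum_{k>N}\lambda^{-k}=O(\lambda^{-N})=O(|t|)$. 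Summing the coherent regime gives
\[
\int\varphi\,d\mu_t-\int\varphi\,d\mu_0 \;=\; t\!\!\sum_{k\le N(t)}\!\!\sigma_k\,\varphi(c_{k+1}(0))\,b_{k+1}(0)\;+\;O(|t|)\, ,
\]
so the theorem follows once the eventually $p$-periodic summand has nonzero mean, since then the partial sum grows like $N(t)\asymp|\log|t||$.

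The main obstacle is precisely the \emph{non-cancellation} of this coherent sum, and this is where both the choice of $f_0$ and the choice of $\varphi$ are spent. I would produce a concrete Markov tent map whose kneading signs $\sigma_k$ and periodic postcritical cycle $(c_k(0))$ are explicit, and then select a $C^\infty$ observable $\varphi$ whose values on the finitely many cycle points make the period average $\tfrac1p\sum_{k}\sigma_k\varphi(c_{k+1}(0))b_{k+1}(0)$ nonzero — this is a single finite linear condition, but one must check it is compatible with, rather than cancelled by, the normalisation term $\partial_t\log Z_t$ (which has the same structure with $\varphi\equiv1$). Heuristically this nonvanishing mean is a nonzero residue at $z=1$ of the susceptibility function \eqref{SF}: for a Markov map the latter is rational, and the periodic critical cycle, after combining the jump-height factor $\lambda^{-p}$ with the growth $\lambda^{p}$ of $\partial_t c_k$, contributes a factor $1-z$ in the denominator, i.e.\ a pole sitting exactly on the unit circle at $z=1$, which is the structural obstruction to linear response. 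Finally, I would pass to a sequence $t_n\to 0$ rather than to all $t$: along a well-chosen one-sided sequence avoiding the bifurcation parameters at which the combinatorics of $f_{t_n}$ (hence the $\sigma_k$ and the cutoff structure) would jump, the sign of the coherent sum is stable, and the lower bound holds with a uniform constant $C$.
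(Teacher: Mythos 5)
A preliminary caveat: this survey \emph{states} Theorem~\ref{counter} without proof, citing \cite{MM} and \cite{B1}, so your sketch can only be compared with the arguments of those references and with the paper's surrounding discussion. Your mechanism is the right one, and it is exactly the one behind the cited proofs and behind the paper's explanation of the obstruction (the singular part of $\rho_t$ is a sum of jumps of height $\asymp\lambda^{-k}$ at the postcritical points, which move with speed $\asymp\lambda^{k}$, so each of the first $N(t)\asymp|\log|t||$ scales contributes $\asymp|t|$ coherently, and non-horizontality of the tent family, $\JJ(f,v)\neq 0$ in \eqref{defhor}, prevents cancellation). The implementation differs from \cite{MM, B1}, where both $f_0$ \emph{and} the approximating parameters $t_n$ are taken Markov, so that both densities are finite explicit sums of indicator functions and the difference is computed by hand on a handful of preperiodic parameters; your linearization of the infinite kneading series is closer in spirit to the later proof of \eqref{starstar} in \cite{BS1}, and if completed it proves somewhat more (a lower bound for all small $t$ with stable combinatorics, not only along special Markov sequences).

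Three points in your sketch need to be nailed down; all are fixable. (1) \emph{Sign bookkeeping.} In the correct density formula the coefficient of the $k$-th jump is $1/(f_t^{k})'(c_1)$ \emph{with its sign}, so your $\sigma_k$ is the orientation of $(f^{k})'(c_1)$; on the other hand $\partial_t c_{k+1}(0)=(f^{k})'(c_1)\,\JJ_k$ where $\JJ_k$ is the $k$-th partial sum of \eqref{defhor}. Hence the signs cancel in your triple product and the $k$-th coherent term is $t\,\JJ(f,v)\,\varphi(c_{k+1})+o(t)$: your worry about exact periodicity of $b_k$ (which is only \emph{asymptotically} periodic, not eventually periodic) disappears, and the normalisation issue you flag becomes transparent, since carrying out the same computation for $Z_t$ and subtracting gives a coefficient of $t|\log|t||$ proportional to $\JJ(f,v)\bigl(\tfrac1p\sum_{\mathrm{cycle}}\varphi(c_j)-\int\varphi\,d\mu_0\bigr)$, which is made nonzero by a $C^\infty$ bump supported near a single cycle point, because $\mu_0$ has no atoms. (2) \emph{The linear regime needs a quantitative Taylor bound:} $|c_{k+1}(t)-c_{k+1}(0)-t\,\partial_tc_{k+1}(0)|\le C\lambda^{2k}t^2$, and summed against the jump heights $\lambda^{-k}$ over $k\le N(t)$ this is $O(\lambda^{N(t)}t^2)=O(|t|)$; the argument survives only because of this cancellation, so it must be stated. (3) \emph{Combinatorial stability:} pairing the $k$-th terms of the two series requires the kneading data of $f_t$ and $f_0$ to agree up to $k$; for a Markov $f_0$ whose critical point is not periodic one has $\inf_{j\ge1}d(c_j,c)>0$ while $|c_j(t)-c_j(0)|\lesssim\lambda^{j}|t|$, so the symbols agree automatically for all $k\le N(t)-C$ and all small $|t|$, the mismatched boundary terms being $O(|t|)$. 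Thus your restriction to a carefully chosen sequence $t_n$ is not actually needed for the combinatorics; it is harmless, but the honest statement is that your route, once points (1)--(3) are filled in, yields the conclusion of Theorem~\ref{counter} for every sufficiently small $t$, consistent with (and subsumed by) \eqref{starstar}.
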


Setting $v=v_0=\partial_t f_t|_{t=0}$, and assuming $v=X\circ f$, we introduce
\begin{equation}\label{defhor}
\JJ(f,v)=\sum_{j=0}^\infty \frac{v(f^{j}(c))}{(f^j)'(c_1)} =
\sum_{j=0}^\infty \frac{X(f^{j}(c_1))}{(f^j)'(c_1)} \, .
\end{equation}

If $\JJ(f_0,v_0)=0$ (a codimension-one condition on the perturbation $v$ or $X$), 
we say that the path $f_t$ is \emph{horizontal} (at $t=0$).
This condition was first studied for smooth unimodal
maps \cite{Ts', ALM}.
 In the setting of piecewise expanding unimodal maps,
Smania and I proved the following result:

\begin{theorem}[Horizontality and tangency to the topological class \cite{BS1, BS1'}]
A  path $f_t$ is called tangent to the topological class of $f_0$
(at $t=0$) if there exist a path $\tilde f_t$ so that
$f_t-\tilde f_t =O(t^2)$ and homeomorphisms $h_t$
so that $\tilde f_t \circ h_t=h_t \circ f_0$.  Then:
\begin{itemize}
\item
The path $f_t$ is horizontal (at $t=0$) if and only if there is a continuous solution
$\alpha$ to the twisted cohomological equation
\begin{equation}\label{TCE}
v(x)=X \circ f(x) = \alpha \circ f (x)-f'(x) \alpha (x)\, ,\quad
x \ne c\, .
\end{equation}
\item
The path $f_t$ is horizontal (at $t=0$) if and only if it is tangent to the topological
class of $f_0$ (at $t=0$).
\end{itemize}
\end{theorem}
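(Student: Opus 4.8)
The plan is to treat the two bullet points in turn, reducing both to the analysis of the unique bounded formal solution of the twisted cohomological equation \eqref{TCE}. First I would observe that \eqref{TCE} has at most one bounded solution: the difference $\beta$ of two bounded solutions satisfies the homogeneous equation $\beta\circ f=f'\cdot\beta$, whence $\beta(x)=\beta(f^{n}(x))/(f^{n})'(x)$ for all $n$, and $|(f^{n})'|\ge\lambda^{n}\to\infty$ forces $\beta\equiv 0$. The same backward iteration produces the only candidate: solving \eqref{TCE} for $\alpha(x)$ in terms of $\alpha(f(x))$ and iterating gives
\[
\alpha(x)=-\sum_{j=1}^{\infty}\frac{X(f^{j}(x))}{(f^{j})'(x)}\,,
\]
a series converging uniformly on $I$ since $\|X\|_\infty<\infty$ and $|(f^{j})'|\ge\lambda^{j}$. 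A direct substitution (after reindexing) confirms this $\alpha$ solves \eqref{TCE} at every $x\neq c$. Thus the existence of a \emph{continuous} solution is equivalent to the continuity of this explicit $\alpha$.

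For the continuity analysis, the terms of the series are individually continuous away from the backward orbit $\bigcup_{m\ge0}f^{-m}(c)$, so by uniform convergence $\alpha$ is automatically continuous at every point whose forward orbit avoids $c$; it remains to test the backward orbit. At $c$ itself, factoring out $f'(x)$ one writes $\alpha(x)=-f'(x)^{-1}\sum_{j\ge1}X(f^{j}(x))/(f^{j-1})'(f(x))$; since $f(x)\to c_1$ from a single side as $x\to c^{\pm}$, the sum tends to the common value $\sum_{k\ge0}X(c_{k+1})/(f^{k})'(c_1)=\JJ(f,v)$, yielding the one-sided limits $\alpha(c^{\pm})=-\JJ(f,v)/f'(c^{\pm})$. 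Because $f'(c^{+})$ and $f'(c^{-})$ differ (opposite signs at the turning point), $\alpha$ is continuous at $c$ if and only if $\JJ(f,v)=0$. An identical computation at an $m$-th preimage $x_0$ of $c$—factoring out $(f^{m+1})'(x)$ and letting $f^{m+1}(x)\to c_1$—shows the jump there is again proportional to $\JJ(f,v)$. Hence $\alpha$ is continuous everywhere precisely when the path is horizontal, which proves the first bullet.

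For the second bullet I would argue both directions through the same linearised conjugacy. If $f_t$ is tangent to the topological class, with $\tilde f_t\circ h_t=h_t\circ f_0$, $h_0=\mathrm{id}$, and $f_t-\tilde f_t=O(t^2)$, then differentiating the conjugacy relation at $t=0$ with $\alpha:=\partial_t h_t|_{t=0}$ gives $\partial_t\tilde f_t|_{t=0}+f_0'\,\alpha=\alpha\circ f_0$; since $f_t-\tilde f_t=O(t^2)$ forces $\partial_t\tilde f_t|_{t=0}=\partial_t f_t|_{t=0}=v$, this is exactly \eqref{TCE} with continuous $\alpha$, so the path is horizontal by the first bullet. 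Conversely, horizontality furnishes the continuous solution $\alpha$, which I would use as an infinitesimal conjugacy: build homeomorphisms $h_t$ with $h_0=\mathrm{id}$ and $\partial_t h_t|_{t=0}=\alpha$, and set $\tilde f_t:=h_t\circ f_0\circ h_t^{-1}$, conjugate to $f_0$ by construction. Differentiating $\tilde f_t\circ h_t=h_t\circ f_0$ at $t=0$ and invoking \eqref{TCE} gives $\partial_t\tilde f_t|_{t=0}=\alpha\circ f_0-f_0'\alpha=v=\partial_t f_t|_{t=0}$; as $\tilde f_0=f_0$ as well, the two paths agree to first order, i.e.\ $f_t-\tilde f_t=O(t^2)$, so $f_t$ is tangent to the topological class.

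The hard part will be the last step. The solution $\alpha$ produced by the series is only continuous—indeed typically no better than H\"older, because of the dense set of preimages of $c$—so integrating it into a genuine family of homeomorphisms $h_t$ and checking that $\tilde f_t=h_t\circ f_0\circ h_t^{-1}$ remains in the piecewise expanding unimodal class with $f_t-\tilde f_t=O(t^2)$ in the appropriate topology requires care; symmetrically, the converse direction needs the $t$-differentiability of $h_t$ with continuous derivative $\alpha$. Reconciling this merely continuous infinitesimal conjugacy with honest topological conjugacies is the crux of the argument, whereas the uniqueness, the explicit series, and the jump computation that identifies $\JJ(f,v)$ with the obstruction to continuity are comparatively routine.
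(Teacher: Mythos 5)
The survey you are reading states this theorem without proof, citing \cite{BS1, BS1'}, so your proposal can only be judged against what a complete argument requires. Your first bullet is correct and is essentially the argument of \cite{BS1}: uniqueness of bounded solutions via the homogeneous equation, the explicit candidate $\alpha(x)=-\sum_{j\ge1}X(f^j(x))/(f^j)'(x)$, and the computation identifying the one-sided jumps at $c$ and at its preimages as nonzero multiples of $\JJ(f,v)$ (using $f'(c^-)>0>f'(c^+)$). Two points you use silently deserve mention: the convergence of the tail sum to $\JJ(f,v)$ as $f^{m+1}(x)\to c_1$ needs the forward orbit of $c_1$ to avoid $c$, i.e.\ the standing assumption that $c$ is not periodic; and since the backward orbit of $c$ is typically \emph{dense}, matching one-sided limits at each preimage does not by itself give continuity --- one must invoke the uniform convergence of the series to pass from ``all jumps vanish'' to ``$\alpha$ is continuous''.

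The second bullet, however, is not proved, and this is a genuine gap, not a deferrable technicality. In the direction ``tangent $\Rightarrow$ horizontal'' you differentiate $\tilde f_t\circ h_t=h_t\circ f_0$ in $t$ and set $\alpha=\partial_t h_t|_{t=0}$; but the definition of tangency supplies only homeomorphisms $h_t$, with \emph{no} regularity in $t$ and \emph{no} regularity in $x$ (and $\tilde f_t$ itself is not assumed differentiable in either variable), so none of the derivatives in your computation exist a priori. This direction can be repaired without ever differentiating $h_t$: conjugacies between unimodal maps fix the turning point, so $h_t(c)=c$, whence
\begin{equation*}
|f_t^n(c)-f_0^n(c)|\le |f_t^n(c)-\tilde f_t^n(c)|+\|h_t-\mathrm{id}\|_\infty
\le C\Lambda^n t^2 + C'|t|\, ,
\end{equation*}
where $\Lambda=\sup_t\sup|f_t'|$, the first term comes from iterating maps that are $O(t^2)$-close, and the bound $\|h_t-\mathrm{id}\|_\infty\le \|\tilde f_t-f_0\|_\infty/(\lambda-1)=O(|t|)$ follows from expansion (two points with the same itinerary under sup-norm-close $\lambda$-expanding maps must be proportionally close). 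Combining this with $\partial_t f_t^n(c)|_{t=0}=(f^{n-1})'(c_1)\sum_{i=0}^{n-1}v(f^i(c))/(f^i)'(c_1)$ and letting $t\to0$ for each fixed $n$ shows the partial sums of \eqref{defhor} are $O(\lambda^{-n})$, hence $\JJ(f,v)=0$.

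The converse direction ``horizontal $\Rightarrow$ tangent'' is the actual content of \cite{BS1'}, and your sketch concedes it rather than proves it. The obstruction is structural: the solution $\alpha$ of \eqref{TCE} is merely continuous (typically H\"older and no better), so $\mathrm{id}+t\alpha$ need not be injective for \emph{any} $t\ne 0$ (if $\alpha\sim\sqrt{|x-x_0|}$ near some point, monotonicity fails near $x_0$ for one sign of $t$); thus there is no obvious family of homeomorphisms ``with derivative $\alpha$''. Moreover, your verification that $\tilde f_t:=h_t\circ f_0\circ h_t^{-1}$ satisfies $f_t-\tilde f_t=O(t^2)$ invokes the spatial chain rule, i.e.\ $x$-differentiability of $h_t$, which fails for any $h_t$ built from $\alpha$. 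A complete proof must construct, by dynamical means (controlling the kneading data, equivalently the motion of the critical orbit, to first order), an actual path inside the topological class of $f_0$ that approximates $f_t$ to second order; that construction is the substance of \cite{BS1'} and is absent from your argument, so the equivalence in the second bullet remains unestablished.
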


Note that the family of tent maps given in \eqref{tentt} is \emph{not} horizontal.

\smallskip 

We already mentioned that $\rho_t \in BV$. Any function $g$ of bounded variation
can be decomposed as two functions of bounded variation
$g=g^{sing}+g^{reg}$, where the regular component $g^{reg}$ is
a continuous function of bounded variation, while the singular component 
$g^{sing}$ is
an at most countable sum of jumps (i.e., Heaviside functions). In the
particular case of the invariant density $\rho_t$ of a piecewise
expanding unimodal map, we proved
\cite{B1} that $(\rho_t^{reg})'$ is of bounded variation,
while the jumps of $\rho_t^{sing}$ are located along the postcritical
orbit $c_k$, with exponentially decaying weights, so that $(\rho_t^{sing})'$
is an exponentially decaying sum of Dirac masses along the postcritical orbit.
\emph{The fact that the derivative of $\rho_0$ does not belong to
a space on which the transfer operator has a spectral gap is the
glitch which disrupts the spectral perturbation
mechanism described in Section~\ref{toy}} (in Section \ref{STST}  the derivative of the distribution corresponding to the SRB measure
\emph{did} belong to a good space of anisotropic distributions). Note also
that $\rho_0^{sing}$ is intimately related to the
postcritical orbit of $f_0$, which is itself connected to the
bifurcation structure of $f_t$ at $f_0$. (We refer also to \cite{B2}.)

\medskip 

Our main result with Smania on piecewise expanding maps  reads as follows:

\begin{theorem}[Horizontality and linear response \cite{BS1}]
\quad \phantom{
If the path $f_t$ is hor-}
\begin{itemize}
\item
If the path $f_t$ is horizontal (at $t=0$) then the map
$t\mapsto \mu_t\in C(I)^*$ is differentiable at $t=0$
(as a Radon measure), and we have the linear response formula:
\begin{equation}\label{LR0}
\partial_t \mu_t|_{t=0}=
- \alpha (\rho^{sing})' - (1- \LL_0)^{-1} (X' \rho^{sing} + (X \rho^{reg})') \, dx\, .
\end{equation}
\item
If the path $f_t$ is not horizontal (at $t=0$), then, if
in addition $|f'(c_-)|=|f'(c_+)|$ or $\inf_j d(f^j(c),c)>0$, we have:

If the postcritical orbit $\{c_k\}$ is not
\footnote{Generically the postcritical orbit is dense, see the references
to Bruin in \cite{Sc}.} dense in $[c_2, c_1]$, then
there exist $\varphi \in C^\infty$ and $K>0$
so that for any sequence $t_n \to 0$ so that the postcritical
orbit of each $f_{t_n}$ is infinite, 
\begin{equation}\label{starstar}
\left | \int \varphi \, d\mu_{t_n} - \int \varphi \, d\mu_0\right |
\ge K |t_n| |\log |t_n|| \, , \quad \forall n \, .
\end{equation}

If the postcritical orbit is dense in $[c_2,c_1]$, then there exist $\varphi \in C^\infty$
and sequences $t_n\to 0$ so that
\begin{equation}\label{starstarstar}
\lim_{n \to\infty} \frac{\left | \int \varphi \, d\mu_{t_n} - \int \varphi \, d\mu_0\right |}
{|t_n| } =\infty \, .
\end{equation}
\end{itemize}
\end{theorem}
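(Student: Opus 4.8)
The plan is to run the spectral-perturbation argument of Theorem~\ref{toythm} while tracking precisely where it breaks, and then to repair it by hand in the horizontal case. As recalled just before the statement, the obstruction is that $\rho_0'=(\rho^{sing})'+(\rho^{reg})'$ has a singular part $(\rho^{sing})'=\sum_k w_k\,\delta_{c_k}$, an exponentially decaying string of Dirac masses along the postcritical orbit, which lies \emph{outside} any space on which $\LL_0$ enjoys a spectral gap. First I would reproduce Steps~1--3 of Section~\ref{toy} on a space of bounded-variation densities, obtaining the formal identity $\MM_0\rho_0=-(X\rho_0)'$, so that the candidate derivative is $-(1-\LL_0)^{-1}\big((X\rho_0)'\big)$. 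The crux is to make sense of the resolvent applied to $(X\rho_0)'$. Splitting
\begin{equation*}
(X\rho_0)'=\big(X'\rho^{sing}+(X\rho^{reg})'\big)+X\,(\rho^{sing})',
\end{equation*}
the first bracket is of bounded variation (here $(\rho^{reg})'\in BV$ and $X'\rho^{sing}$ is continuous-times-jump), so the reduced resolvent $(1-\LL_0)^{-1}$ acts on it boundedly using the $BV$ spectral gap and contributes the second term of \eqref{LR0}. Everything thus reduces to controlling the singular piece $X\,(\rho^{sing})'$.

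For the first (horizontal) item the key observation I would exploit is that the continuous solution $\alpha$ of the twisted cohomological equation \eqref{TCE} furnishes an \emph{explicit} preimage. Using that $\LL_0$ pushes a mass at $c_k$ forward to $c_{k+1}$, and that the jump weights obey the propagation rule $w_{k-1}=f'(c_{k-1})\,w_k$ forced by $\LL_0\rho_0=\rho_0$, a one-line telescoping gives
\begin{equation*}
(1-\LL_0)\big(\alpha\,(\rho^{sing})'\big)=\sum_k\big(\alpha(c_k)-f'(c_{k-1})\alpha(c_{k-1})\big)\,w_k\,\delta_{c_k}=\sum_k X(c_k)\,w_k\,\delta_{c_k}=X\,(\rho^{sing})',
\end{equation*}
where the middle equality is exactly \eqref{TCE} evaluated at the points $c_{k-1}$. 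Hence $(1-\LL_0)^{-1}\big(X(\rho^{sing})'\big)=\alpha(\rho^{sing})'$, understood as a genuine Radon measure (the series converges in $C(I)^*$ since $\alpha$ is bounded and $w_k$ decays exponentially), and $\JJ(f_0,v_0)=0$ guarantees that both split pieces have vanishing integral, so the reduced resolvent is legitimately applied. Assembling the pieces yields formula \eqref{LR0}. To upgrade this from a formal computation to true (Whitney) differentiability I would bound the remainder $\mu_s-\mu_0-s\,\partial_t\mu_t|_{t=0}$ in $C(I)^*$, combining Keller's estimate \eqref{Kelcond} with the continuous dependence on $t$ of both the spectral data of $\LL_t$ on $BV$ and the postcritical weights $w_k(t)$.

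For the second (non-horizontal) item the mechanism is the same identity run in reverse: when $\JJ(f_0,v_0)\neq 0$ the equation \eqref{TCE} has no continuous solution, the partial sums of the weights no longer cancel, and the resolvent applied to $X(\rho^{sing})'$ genuinely diverges. Concretely I would estimate $\int\varphi\,d\mu_{t_n}-\int\varphi\,d\mu_0$ for a well-chosen $C^\infty$ observable $\varphi$ along a sequence $t_n\to0$, tracking the \emph{displacement} of the singular spikes: the critical value $c_1(t)$ moves by order $t$, this displacement is transported through the dynamics, and the non-cancelling series forces a net contribution. The auxiliary hypotheses $|f'(c_-)|=|f'(c_+)|$ or $\inf_j d(f^j(c),c)>0$ serve to rule out spurious cancellation between the two branches at $c$ and between near-returns of the orbit to $c$. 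The dichotomy on the postcritical orbit then governs the rate: when $\{c_k\}$ is \emph{not} dense in $[c_2,c_1]$ the moving spikes stay well separated and one recovers the sharp lower bound $K|t_n|\,|\log|t_n||$ of \eqref{starstar}, matching Keller's upper bound; when $\{c_k\}$ is dense the spikes accumulate and their aggregate contribution is of strictly larger order, giving the superlinear blow-up \eqref{starstarstar}.

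The main obstacle, in both directions, is that $(\rho^{sing})'$ escapes the functional-analytic framework that makes Section~\ref{toy} work, so $(1-\LL_0)^{-1}$ cannot simply be applied to it. The horizontal case is rescued only because \eqref{TCE} produces the explicit preimage $\alpha(\rho^{sing})'$ under $1-\LL_0$, bypassing the bad spectrum; the hard quantitative work is then the uniform remainder control certifying genuine differentiability rather than a formal derivative. In the non-horizontal case the difficulty is the opposite one, namely proving honest \emph{lower} bounds: this demands a sharp asymptotic description of the moving singular spikes of $\rho_{t_n}$ relative to those of $\rho_0$, together with a judicious choice of $\varphi$ and $t_n$ ensuring that the $\JJ\neq0$ obstruction survives into the tested integral without cancellation, the dense case being the most delicate because of the accumulation of spikes.
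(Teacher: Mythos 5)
First, a caveat on the comparison: this survey states the theorem but does not prove it (the proof is in \cite{BS1}), so your proposal must be judged against the strategy of \cite{BS1} as reflected in the paper's surrounding discussion. The algebraic core of your horizontal case is correct and is indeed the right mechanism behind \eqref{LR0}: the splitting of $(X\rho_0)'$, the observation that $\JJ(f_0,v_0)=0$ is exactly the vanishing of $\int X\,(\rho^{sing})'=\sum_k X(c_k)w_k$, and the telescoping identity $(1-\LL_0)\bigl(\alpha\,(\rho^{sing})'\bigr)=X\,(\rho^{sing})'$ all check out --- provided you also treat the boundary term $k=1$: equation \eqref{TCE} is only assumed for $x\ne c$, and one needs the two one-sided limits at $c$ (where $f'(c_-)\ne f'(c_+)$) to force $\alpha(c)=0$ and hence $X(c_1)=\alpha(c_1)$.

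The genuine gap is the passage from this formal computation to differentiability. Exhibiting a preimage is not evaluating a resolvent: on Radon measures the kernel of $1-\LL_0$ (the pushforward) contains \emph{every} $f_0$-invariant measure, and the Neumann series $\sum_j\LL_0^j\bigl(X(\rho^{sing})'\bigr)$ has no reason to converge (its terms do not tend to zero), so the algebra cannot by itself identify $\partial_t\mu_t|_{t=0}$; that identification \emph{is} the theorem. More decisively, the tools you propose for the remainder --- Keller's estimate \eqref{Kelcond} together with continuity of the spectral data of $\LL_t$ on $BV$ --- cannot suffice, for a structural reason: they are blind to horizontality. They hold for every $C^2$ family of piecewise expanding unimodal maps, in particular for the tent family \eqref{tentt}, where the second item of the theorem shows that differentiability \emph{fails}; moreover \eqref{Kelcond} is $O(|t|\,|\log|t||)$, which is not $o(|t|)$. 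The missing ingredient, which \cite{BS1} obtains from the companion theorem on tangency to the topological class, is uniform-in-$k$ first-order control of the spike positions: differentiating $c_k(t)=f_t^k(c)$ and telescoping with \eqref{TCE} gives $\partial_t c_k(t)|_{t=0}=\alpha(c_k)$, bounded in $k$ (whereas non-horizontally $\partial_t c_k\sim(f^{k-1})'(c_1)\,\JJ$ grows exponentially), and one must prove the errors are $o(t)$ \emph{uniformly} in $k$. One then subtracts from $\mu_t-\mu_0$ the explicitly transported singular part --- whose derivative is the term $-\alpha(\rho^{sing})'$ in \eqref{LR0} --- and only the resulting $BV$-regular remainder is handled by Keller--Liverani-type perturbation theory. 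On the lower bounds your picture is qualitatively right, but note that the logarithm in \eqref{starstar} comes from counting the roughly $\log(1/|t|)$ spikes displaced beyond their own scale, each contributing order $|t|$ (the hypotheses at $c$ and the non-density serve to prevent cancellation among these contributions), and that the dense case \eqref{starstarstar} is a \emph{weaker} conclusion along specially constructed sequences, not a stronger blow-up caused by ``accumulating spikes.''
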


We end this section with some of our results on the susceptibility function
(recall \eqref{SF})
$$
\Psi_\varphi(z)=
\sum_{j=0}^\infty \int z^j   (\partial_x (\varphi \circ f_0^j) (x))  X_0(x) \, d\mu_0(x)
$$
of piecewise expanding unimodal maps (for $\lambda>1$), the most recent of which were
obtained with Marmi and Sauzin (using work of Breuer and Simon
\cite{BrSi}):

\begin{theorem}[\cite{B1, BMS}] 
There exists a nonzero function $\UU(z)$, holomorphic in
$|z|> \lambda^{-1}$, and, for every non constant $\varphi\in C^0$
so that $\int \varphi\, d\mu_0=0$, there exists
a nonzero function $\VV_\varphi(z)$, holomorphic in
$|z|> \lambda^{-1}$, so that the following holds:
Put
$$\sigma_\varphi(z)=\sum_{j=0}^\infty \varphi(c_{j+1}) z^j$$
(this function is holomorphic in the open unit disc),
and set
$$
\Psi^{hol}(z)=- \int \varphi(x) (1-z \LL_0)^{-1}(X' \rho^{sing} + (X \rho^{reg})') (x)\, dx\, .
$$
Then:
\begin{itemize}
\item
There exists $\tau\in(0,1)$ so that $\Psi^{hol}(z)$ 
is holomorphic in the disc $|z|<\tau^{-1}$.
\item
The susceptibility
function satisfies
$$
\Psi_\varphi(z)=\sigma_\varphi(z) \UU(z) +\VV_\varphi(z)+\Psi^{hol}(z)\, ,
$$
where the function $\UU(z)$ vanishes at $z=1$ if and only if 
$\JJ(f,v)=0$, and in that case, we have
$$
\partial_t \int \varphi\, d\mu_t|_{t=0}= \VV_\varphi(1) + \Psi^{hol}(1)\, .
$$
\item
If $\{c_k\}$ is dense in $[c_2,c_1]$ and $\varphi\ne 0$, then
the unit circle is a (strong) natural boundary for $\sigma_\varphi(z)$
(and thus for $\Psi_\varphi(z)$).
If
\footnote{This assumption of Birkhoff
genericity of the postcritical orbit is
generic \cite{Sc}.} $\lim_{n \to \infty} \frac{1}{n} \sum_{k=1}^{n} \tilde \varphi(c_k)=
\int \tilde \varphi\, d\mu_0$ for every $\tilde \varphi\in C^0$, then
for every $\omega \in \real$ 
$$
\lim_{z \NT e^{i\omega}} (z-e^{i\omega})\sigma_\varphi(z)=0  \, ,
$$
where $z \NT e^{i\omega}$ means that $|z|<1$ tends to $e^{i\omega}$ nontangentially
(e.g., radially).
\end{itemize}
\end{theorem}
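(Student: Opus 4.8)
The plan is to compute $\Psi_\varphi(z)$ by integrating by parts in each term and carefully tracking the singular part of the density. Writing $\int \partial_x(\varphi\circ f_0^j)\,X\,d\mu_0=\int \partial_x(\varphi\circ f_0^j)\,X\rho_0\,dx$ and integrating by parts, I would split $\partial_x(X\rho_0)$ using $\rho_0=\rho^{sing}+\rho^{reg}$: by the structure recalled above from \cite{B1}, $(\rho^{reg})'\in BV$ while $(\rho^{sing})'=\sum_{k\ge 1}s_k\,\delta_{c_k}$ is an exponentially decaying sum of Dirac masses along the postcritical orbit, with weights $s_k$ a fixed nonzero multiple of $(f^{k-1})'(c_1)^{-1}$, so that $|s_k|=O(\lambda^{-k})$. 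This separates $\partial_x(X\rho_0)$ into a $BV$ part $\psi:=X'\rho^{sing}+(X\rho^{reg})'$ and a purely atomic part $X(\rho^{sing})'=\sum_{k\ge1}s_kX(c_k)\,\delta_{c_k}$. Using $f_0^j(c_k)=c_{k+j}$, one obtains
$$\Psi_\varphi(z)=-\sum_{j\ge0}z^j\!\int\!\varphi\,\LL_0^j\psi\,dx\;-\;\sum_{j\ge0}z^j\sum_{k\ge1}s_kX(c_k)\,\varphi(c_{k+j})\,.$$

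The first sum equals $-\int\varphi\,(1-z\LL_0)^{-1}\psi\,dx=\Psi^{hol}(z)$ by duality of the transfer operator. Since $\psi\in BV$ and $\LL_0$ has a spectral gap on $BV$, with spectrum in $\{1\}\cup\{|w|\le\tau\}$ for some $\tau<1$, the resolvent is holomorphic in $|z|<\tau^{-1}$ apart from a possible pole at $z=1$ whose residue is proportional to $\rho_0\int\psi\,dx$; as $\int\psi\,dx=0$ (by integration by parts, equivalently because each $\mu_t$ is a probability measure), $\Psi^{hol}$ is holomorphic in $|z|<\tau^{-1}$, giving the first item. For the atomic sum I would reindex $m=k+j$ and use $\sum_{m\ge1}\varphi(c_m)z^m=z\,\sigma_\varphi(z)$ to write it as $\sigma_\varphi(z)\UU(z)+\VV_\varphi(z)$ with
$$\UU(z)=-\sum_{k\ge1}s_kX(c_k)\,z^{1-k}\,,\qquad \VV_\varphi(z)=\sum_{k\ge1}\sum_{m=1}^{k-1}s_kX(c_k)\varphi(c_m)\,z^{m-k}\,.$$
Because $|s_k|=O(\lambda^{-k})$ and $\varphi,X$ are bounded, both series converge absolutely and are holomorphic in $|z|>\lambda^{-1}$, with $\UU$ independent of $\varphi$; this is the decomposition of the second item. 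Reindexing $j=k-1$ shows $\UU(1)=-\sum_{k\ge1}s_kX(c_k)$ is a nonzero multiple of $\JJ(f,v)=\sum_{j\ge0}X(c_{j+1})/(f^j)'(c_1)$, so $\UU(1)=0$ iff $\JJ(f,v)=0$. In the horizontal case $\UU(1)=0$, and combining the simple zero of $\UU$ at $z=1$ with the nontangential estimate of the third item (which forces $(z-1)\sigma_\varphi(z)\to0$) gives $\sigma_\varphi(z)\UU(z)\to0$ as $z\to1$, whence $\partial_t\int\varphi\,d\mu_t|_{t=0}=\Psi_\varphi(1)=\VV_\varphi(1)+\Psi^{hol}(1)$.

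It remains to study $\sigma_\varphi$ on the unit circle. For the natural boundary I would invoke Breuer--Simon \cite{BrSi}: when $\{c_k\}$ is dense in $[c_2,c_1]$ and $\varphi\ne0$, the bounded coefficient sequence $(\varphi(c_{j+1}))_j$ is recurrent in the sense required by their criterion, its shifts accumulating on sequences that are not identically zero ($\varphi$ being continuous and nonzero on a dense orbit); their theorem then yields $|z|=1$ as a strong natural boundary for $\sigma_\varphi$, hence for $\Psi_\varphi$, the summands $\UU,\VV_\varphi,\Psi^{hol}$ being holomorphic across the circle. For the nontangential limits under Birkhoff genericity, I would write $(z-e^{i\omega})\sigma_\varphi(z)=-e^{i\omega}(1-w)\sum_{j\ge0}b_jw^j$ with $w=ze^{-i\omega}$ and $b_j=\varphi(c_{j+1})e^{ij\omega}$, so that, by the nontangential Abelian theorem (valid since $(b_j)$ is bounded), it suffices to show the Ces\`aro means $\frac1n\sum_{j<n}b_j$ tend to $0$. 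This twisted Birkhoff average is controlled by van der Corput's inequality, which bounds it in terms of the averaged correlations $\frac1n\sum_{j<n}(\varphi\circ f_0^h)(c_{j+1})\,\overline{\varphi(c_{j+1})}$; applying the genericity hypothesis to the continuous function $(\varphi\circ f_0^h)\,\overline{\varphi}$ shows these converge to $\int(\varphi\circ f_0^h)\,\overline{\varphi}\,d\mu_0$, which tends to $|\int\varphi\,d\mu_0|^2=0$ as $h\to\infty$ by mixing of $\mu_0$. Hence $\frac1n\sum_{j<n}b_j\to0$ and the nontangential limit vanishes for every $\omega$.

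The main obstacle is twofold. The first genuinely nontrivial input is the fine structure of $\rho^{sing}$ imported from \cite{B1} --- that its distributional derivative is \emph{exactly} an exponentially decaying sum of Dirac masses along the postcritical orbit with weights tied to $(f^{k-1})'(c_1)^{-1}$ --- since this is what lets the atomic sum be reorganized into $\sigma_\varphi\UU+\VV_\varphi$ and what identifies $\UU(1)$ with $\JJ(f,v)$; the integration by parts and the convergence bookkeeping placing $\UU,\VV_\varphi$ in $|z|>\lambda^{-1}$ are then careful but routine. The second, and I expect the true crux, is the natural-boundary claim: unlike the rest of the argument it is not a formal manipulation but rests on the harmonic-analytic machinery of \cite{BrSi}, and checking that the postcritical coefficient sequence meets their recurrence hypothesis is the delicate point. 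The twisted-average step, while elementary via van der Corput, is where mixing and genericity genuinely enter and is the natural companion difficulty.
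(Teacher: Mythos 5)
Your skeleton --- integrating by parts against $\mu_0$, splitting $(X\rho_0)'$ into the $BV$ part $\psi:=X'\rho^{sing}+(X\rho^{reg})'$ and the atomic part $X(\rho^{sing})'=\sum_k s_kX(c_k)\delta_{c_k}$, resumming the atomic double series into $\sigma_\varphi\UU+\VV_\varphi$, and the van der Corput/Abelian treatment of the twisted Birkhoff averages --- is indeed the mechanism behind \cite{B1,BMS}, and the resummation, the identification $\UU(1)=-\sum_k s_kX(c_k)=\mathrm{const}\cdot\JJ(f,v)$, and the nontangential-limit step are sound. But three points are wrong or missing. First, your claim that $\int\psi\,dx=0$ is false, and it contradicts your own formulas: since $X\rho_0$ is a $BV$ function supported in $[c_2,c_1]$ (contained in the open interval, as $c_1<1$), one has $\int(X\rho_0)'\,dx=0$, hence $\int\psi\,dx=-\int X\,(\rho^{sing})'\,dx=-\sum_k s_kX(c_k)=\UU(1)$, a nonzero multiple of $\JJ(f,v)$. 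Were it always zero, every path would be horizontal (the tent family \eqref{tentt} is not). The first item nevertheless holds, but for a different reason: the pole of the resolvent contributes $\tfrac{1}{1-z}\rho_0\int\psi\,dx$, which after pairing with $\varphi$ gives $\tfrac{1}{1-z}\int\psi\,dx\int\varphi\,d\mu_0=0$ by the standing hypothesis $\int\varphi\,d\mu_0=0$ on the observable --- indeed, in the non-horizontal case the operator-valued quantity $(1-z\LL_0)^{-1}\psi$ genuinely has a pole at $z=1$, and only the scalar function $\Psi^{hol}$ is holomorphic.

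The second and more serious gap is your derivation of the horizontal-case formula $\partial_t\int\varphi\,d\mu_t|_{t=0}=\VV_\varphi(1)+\Psi^{hol}(1)$: you simply equate the derivative with the nontangential limit of $\Psi_\varphi$ at $z=1$. That the derivative exists at all, and that any limit or continuation of the susceptibility function computes it, is precisely the linear response problem --- it cannot be assumed (the ``Or Else'' part of this paper is exactly about its failure), and your argument moreover needs the Birkhoff genericity of the third item, which the second item does not assume. The correct route, which is how \cite{BMS} proceeds, is to invoke the linear response theorem of \cite{BS1} (formula \eqref{LR0}): the derivative exists and equals $-\alpha(\rho^{sing})'-(1-\LL_0)^{-1}\psi\,dx$ with $\alpha$ the continuous solution of \eqref{TCE}, and one then verifies $-\int\varphi\,\alpha\,(\rho^{sing})'=\VV_\varphi(1)$ using the explicit series $\alpha(x)=-\sum_{j\ge1}X(f^j(x))/(f^j)'(x)$ and the chain rule $(f^{k-1+j})'(c_1)=(f^j)'(c_k)\,(f^{k-1})'(c_1)$. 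Third, you misquote the Breuer--Simon criterion: what must be produced is a right limit of $(\varphi(c_{j+1}))_j$ that is \emph{not reflectionless}, i.e.\ a two-sided accumulation sequence $(b_n)_{n\in\integer}$ of the shifts whose half-series $\sum_{n\ge0}b_nz^n$ and $-\sum_{n<0}b_nz^n$ are not analytic continuations of one another; ``not identically zero'' is far from sufficient (nonzero constant or periodic right limits are reflectionless and yield no natural boundary). The crux in \cite{BMS} is to manufacture a non-reflectionless right limit from density of the postcritical orbit, exploiting that backward orbits of a unimodal map branch, so that one obtains two right limits agreeing for $n\ge0$ but differing at some $n<0$, where nonconstancy of $\varphi$ enters; without this step the natural-boundary claim is unproved.
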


In particular, if the path $f_t$ is horizontal 
(at $t=0$) and the postcritical orbit is generic, then
$$
\partial_t \int \varphi\, d\mu_t|_{t=0}= 
\lim_{z \NT 1} \Psi_\varphi(z)\, .
$$

The law of the iterated logarithm (LIL), a property stronger  than Birkhoff genericity,
also holds generically for the postcritical orbit of piecewise expanding maps \cite{Sc'}.
If the postcritical orbit satisfies (an $e^{i\omega}$ twisted 
upper bound version of) the LIL, then more can be said
about $\sigma_\varphi$ and $\Psi_\varphi$, see \cite[Thm. 5]{BMS}.

Inspired by Breuer--Simon, we introduced in \cite{BMS} \emph{renacent right-limits,} a simple construction 
for candidates
for a generalised (Borel monogenic \cite{Bo}, e.g.) 
continuation outside of the unit disc of  power series
having the unit circle as a natural boundary. 
In the case of Poincar\'e simple pole series,
Sauzin and Tiozzo \cite{ST} showed that this construction gives the
 (unique) generalised continuation. However, for the susceptibility function
of piecewise expanding maps, there are \cite{BMS} uncountably many such candidates
(even in the horizontal case). This may indicate that there is 
no reasonable way to extend $\Phi_\varphi(z)$ outside of the unit
circle. The analogous problem is more delicate
for smooth unimodal maps discussed in \S~\ref{smooth} below,  mainly
because the natural boundary for the susceptibility function is expected to lie
strictly inside the open unit disc --- we
refer to  \cite{BMS} for open questions and conjectures.

\subsection{Smooth unimodal maps}
\label{smooth}

We now consider the more difficult case of \emph{differentiable} maps
$f:I\to I$, where $I=[0,1]$ is again a compact interval, and $c=1/2$
is now a critical point in the usual sense: $f'(c)=0$. The map $f$
is still assumed unimodal, with $f(-1)=f(1)=-1$, and $f'(x) >0$ for $-1\le x<c$, while
$f'(x)<0$ for $c<x\le 1$. We denote $c_k=f^k(c)$ for $k\ge 1$ as before.
For convenience, we assume that 
$f$ is topologically mixing and  $C^3$, with negative Schwarzian derivative
(see \cite{dMvS}).  Finally, we suppose that $f''(c)<0$.
Of course, $f$ is \emph{not} uniformly expanding since $f'(c)=0$.
One way to guarantee enough (nonuniform) expansion  is via the
Collet--Eckmann condition: The map $f$ is \emph{Collet--Eckmann} (CE)
if there exists $\lambda_c>1$ and $H_0\ge 1$ so that
$$
|(f^k)'(c_1)|\ge \lambda_c^k \, , \quad \forall k \ge H_0\, .
$$
If $f$ is CE, then it admits a (unique) absolutely continuous (SRB) invariant probability 
measure $\mu=\rho \, dx$
(which is ergodic).
We refer to \cite{dMvS} for more about  the CE condition,  noting here only
that the invariant density $\rho$ is not bounded in the current setting --- in fact, 
$\rho$ contains
a finite, or infinite exponentially decaying, sum of ``spikes''
$$\sqrt{|x-f^k(c)|}^{-1}$$ along the postcritical orbit. Thus,  $\rho\in L^p$
for all $1\le p<2$, but $\rho \notin L^2$.
If $f$ is CE and topologically mixing on $[c_2,c_1]$, then Keller and Nowicki \cite{keno}
and, independently, Young \cite{young92}, proved that a spectral gap holds for
a suitably defined transfer operator (acting on a ``tower''), giving exponential
decay of correlations.   

We consider again a $C^2$ path $t\mapsto f_t$, $t\in (-\epsilon, \epsilon)$,
say, of $C^4$ unimodal
maps as above, through $f=f_{t_0}$ (with $t_0$ not necessarily
equal to $0$) which will be assumed to
be (at least) CE. We let $v=v_{t_0}=\partial_t f_t|_{t=t_0}$ 
and assume that $v=X \circ f$.
Noting that $\JJ(f,v)$ from \eqref{defhor} 
is well defined because of the CE condition, we say that the path
$f_t$ is horizontal at $t=t_0$ if $\JJ(f,v)=0$.

The fully horizontal case (i.e., $\JJ(f_t,v_t)=0$ for all
$t$ in a neighbourhood of $t_0$) happens when
$f_t$ is topologically conjugated to $f_{t_0}$ for all $t$, so that $f_t$
stays in the topological class of $f_{t_0}$. Then, if
$f_{t_0}$ is Collet--Eckmann, all the $f_t$ are Collet--Eckmann 
(although it is not obvious from the definition,
the CE property is a topological invariant \cite{NP}) and
admit an SRB measure. In this fully horizontal case, viewing
$\rho_t$ as a distribution of sufficiently high order, first  Ruelle \cite{R3}
and then Smania and myself
\cite{BS2', BS2} obtained linear response,
with a linear response formula. (In \cite{BS2'}, we even obtain analyticity
of the SRB measure.) More precisely, Ruelle \cite{R3} considered
 the analytic case under the   Misiurewicz 
\footnote{Misiurewicz is nongeneric. It implies Collet--Eckmann.} assumption that
$\inf_k |f_{t_0}^k(c)-c|>0$;
  Smania and myself considered
on the one hand  \cite{BS2'}  a fully holomorphic setting (where the powerful
machinery of Ma\~n\'e--Sad--Sullivan \cite{MSS} applies), and on the other hand
 \cite{BS2}
a finitely differentiable setting  under a (generic) Benedicks--Carleson-type
assumption of topological slow recurrence.  The strategy in \cite{BS2} involves proving the
existence of a continuous solution $\alpha$ to the twisted cohomological
equation \eqref{TCE} if $f$ is Benedicks--Carleson and $X$ corresponds
to a horizontal path $f_t$.

Although the horizontal case is far from
trivial (in the present nonuniformly expanding setting, one of the
hurdles is to obtain uniform bounds on the constant
$\lambda_c(t)$ for  CE parameters $t$ close to $t_0$), it is much more interesting
to explore \emph{transversal paths} $t\mapsto f_t$ (undergoing topological bifurcations).
The archetypal such situation is given by the so-called \emph{logistic}
(or quadratic) family 
$$f_t(x)=tx(1-x)\, .
$$
A famous theorem of Jacobson  says that the
set of CE parameters in the logistic family has strictly positive
Lebesgue measure  (see \cite{dMvS}, e.g.). Since the set $\Lambda$
of CE parameters does not contain
any interval, regularity of the map $t\mapsto \mu_t$
for $t$ in $\Lambda$ can be considered only in the sense of Whitney.
Continuity of the map $t\mapsto \mu_t$, for $t$ ranging in some appropriate
subset of $\Lambda$ (and for the weak $*$ topology in the image)
was obtained by
Tsujii \cite{Ts} (see also  Rychlik--Sorets \cite{RySo}) in the 90's.

A map $f$ is called \emph{Misiurewicz--Thurston} if
there exist $j\ge 2$ and $p\ge 1$ 
so that $f^p(c_j)=c_j$ and $|(f^p)'(c_j)|>1$
(in other words, the critical
point is \emph{preperiodic,} towards a repelling periodic orbit,
this implies that the map has a finite  Markov partition). Clearly, Misiurewicz--Thurston implies
Misiurewicz and thus Collet--Eckmann. There are only countably many
Misiurewicz--Thurston parameters.

For the quadratic family, e.g.,
Thunberg proved \cite[Thm C]{thun}
that there are superstable parameters $s_{n}$ of periods $p_n$, with
$s_{n}\to t$, for a Collet--Eckmann parameter $t$,  
so that $\nu_{s_{n}}\to \nu$, where $\nu_{s_{n}}=\frac{1}{p_n}\sum_{k=0}^{p_n-1}
\delta_{f^k_{s_{n}}(c)}$,
and $\nu$ is the sum of atoms on a
repelling periodic orbit of $f_t$. Other sequences  $t_{n}\to t$ of superstable
parameters have the property that $\nu_{t_{n}}\to \mu_t$, the absolutely
continuous invariant measure of $f_t$. 
Starting from Thunberg's result, Dobbs and Todd  \cite{Dob} have 
constructed sequences of
 both renormalisable and non-renormalisable Collet--Eckmann 
  maps  $f_{t'_{n}}$, converging to a Collet--Eckmann map $f_t$,
but such that the SRB measures do not converge. 
Such counter-examples can  be constructed while requiring that $f_t$ and all maps
$f_{t'_{n}}$ are  Misiurewicz--Thurston.
These examples show that
continuity of the SRB measure cannot hold on the set of {\it all}
Collet--Eckmann (or even Misiurewicz--Thurston) parameters: Some
uniformity in the constants is needed (already when defining the ``appropriate
subsets'' of \cite{Ts}).

The main result of our joint work  \cite{BBS} with
Benedicks and Schnellmann (which also contains parallel
statements on more general transversal familes of smooth unimodal maps)
follows:

\begin{theorem}[H\"older continuity of the SRB measure in the logistic family \cite{BBS}]
\label{last}
Consider the quadratic family $f_t(x)=tx(1-x)$ on $I=[0,1]$, and let $\Lambda\subset (2,4]$ be the
 set of Collet--Eckmann parameters $t$.
\begin{itemize}
\item
There exists $\Delta \subset \Lambda$, of full Lebesgue measure in
$\Lambda$, so that for every $t_0\in \Delta$, and for every
$\Gamma > 4$, there exists $\Delta_{t_0} \subset
\Delta$, with $t_0$ a Lebesgue density point of $\Delta_{t_0}$, and there 
exists a constant $C$ so that, for any $\varphi\in C^{1/2}(I)$, for
any sequence $t_n \to t_0$,
so that $t_n\in \Delta_{t_0}$ for all $n$, we have
\begin{equation}\label{main0}
|\int \varphi(x) d\mu_{t_n}-\int \varphi(x) d\mu_{t_0}|
\le C \| \varphi\|_{C^{1/2}} |t_0-t_n|^{1/2} |\log |t_0-t_n||^\Gamma \, ,
\end{equation}
where $\| \varphi\|_{C^{1/2}}$ denotes the $1/2$-H\"older norm of $\varphi$.
\item
If $f_{t_0}$ is Misiurewicz--Thurston, then there exists $\varphi \in C^\infty$,
a constant $C >1$, and a sequence $t_n\to t_0$, with $t_n\in \Lambda$
for all $n$, so that
\begin{equation}\label{main}
\frac{1}{C} |t_n-t_0|^{1/2}\le |\int \varphi(x) d\mu_{t_n}-\int \varphi(x) d\mu_{t_0}|
\le C |t_n-t_0|^{1/2}\, .
\end{equation}
\end{itemize}
\end{theorem}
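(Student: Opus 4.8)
The plan is to prove the two items by rather different mechanisms: the upper bound \eqref{main0} by a balance, index by index along the postcritical orbit, between the exponentially decaying weights and the exponentially growing velocities of the square-root ``spikes'' of the invariant density, and the lower bound \eqref{main} at a Misiurewicz--Thurston parameter by isolating a single non-cancelling square-root contribution. First I would carry out a \emph{parameter-exclusion} step: following the Benedicks--Carleson-type estimates underlying Tsujii's continuity result \cite{Ts}, I would construct, for each $t_0$ in a full-measure subset $\Delta\subset\Lambda$, a set $\Delta_{t_0}\subset\Delta$ having $t_0$ as a Lebesgue density point and on which the Collet--Eckmann constant $\lambda_c(t)$, the constant $H_0(t)$, and a slow-recurrence bound for the orbit $c_k(t)=f_t^k(c)$ are all \emph{uniform}. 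This uniformity is exactly what lets all subsequent estimates be performed with constants independent of $n$, and it is the reason regularity can be asserted only in the Whitney sense along $\Delta_{t_0}$.

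Next I would exploit the structural description of the density recalled above: for a uniformly CE map $f_t$ one has $\rho_t=\rho_t^{reg}+\sum_{k\ge 1} w_k(t)\, s(\,\cdot\,;c_k(t))$, where $s(x;a)$ is a square-root spike with $s(x;a)\sim|x-a|^{-1/2}$, the weights are of order $|(f_t^{k-1})'(c_1)|^{-1/2}\le \lambda_c^{-(k-1)/2}$ uniformly on $\Delta_{t_0}$, and $\rho_t^{reg}$ belongs to a space on which the tower transfer operator of Keller--Nowicki \cite{keno} and Young \cite{young92} has a uniform spectral gap. The regular part is then controlled exactly as in the expanding toy model of Section~\ref{toy}: the uniform spectral gap furnishes a Lipschitz, hence $o(|t-t_0|^{1/2})$, contribution to $\int\varphi\,d\mu_t$, just as the resolvent $(1-\LL_t)^{-1}$ did in the proof of Theorem~\ref{toythm}.

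The heart of the matter is the spike sum. Setting $\delta=|t_n-t_0|$, I would split the sum over $k$ at $k^\ast\sim\log(1/\delta)/\log\lambda_c$. Differentiating the recursion $c_{k+1}(t)=f_t(c_k(t))$ gives $|\partial_t c_k|\le C|(f_t^{k-1})'(c_1)|$, so for $k\le k^\ast$ the spike location moves by at most $\Delta_k\sim\lambda_c^{k}\delta\le 1$. The elementary fact that sliding a square-root spike by $\Delta$ alters its pairing with a $C^{1/2}$ observable by $O(\|\varphi\|_{C^{1/2}}|\Delta|^{1/2})$ then bounds the $k$-th term by $|w_k|\,|\Delta_k|^{1/2}\sim\lambda_c^{-k/2}(\lambda_c^{k}\delta)^{1/2}\sim\delta^{1/2}$: the expansion factors from the velocity cancel those from the weight, leaving a \emph{constant} $\delta^{1/2}$ per term. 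Summing over the $\sim\log(1/\delta)$ relevant indices yields $\delta^{1/2}|\log\delta|$, and the extra logarithmic powers allowed by $\Gamma>4$ absorb the non-sharpness of the CE bounds, the slow-recurrence corrections, and the deformation (rather than pure translation) of the spike profiles; the tail $k>k^\ast$ is dominated by the geometric decay of $w_k$ and is itself of order $\lambda_c^{-k^\ast/2}\sim\delta^{1/2}$. I expect the \textbf{main obstacle} to be precisely this spike sum, and within it the recurrence events where $c_k(t)$ approaches the critical point: there the translation estimate degenerates, and controlling both the profile deformation and the accumulation of logarithmic losses uniformly in $n$ is the delicate core of \cite{BBS}.

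For the lower bound at a Misiurewicz--Thurston $t_0$ the postcritical orbit is finite (preperiodic to a repelling cycle), so $\rho_{t_0}$ carries only finitely many spikes, of definite weight. I would choose $\varphi\in C^\infty$ supported near the first critical value, whose velocity $\partial_t c_1=1/4$ is nonzero, so that $\int\varphi\,d\mu_t$ genuinely tracks the position of the leading spike, and I would take a sequence $t_n\to t_0$ of CE parameters (available since CE, indeed MT, parameters accumulate at $t_0$). Because $f''(c)<0$, the quadratic folding $f_t(c+h)=c_1(t)-a(t)h^2+O(h^3)$ forces the leading square-root singularity to respond to the parameter shift to order exactly $\delta^{1/2}$, with a coefficient bounded away from zero since $\partial_t c_1\ne 0$; matching this non-cancelling term against the upper bound of the first item yields both inequalities in \eqref{main}. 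The delicate point here is to arrange $\varphi$ and the combinatorially explicit sequence $t_n$ so that no cancellation occurs between this leading term and the finitely many subleading spikes together with $\rho^{reg}$.
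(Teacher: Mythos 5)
Your identification of the spike--displacement balance is exactly right, and it matches the mechanism the paper isolates: the weight $|(f_t^{k-1})'(c_1)|^{-1/2}$ of the $k$-th spike against the velocity $|\partial_t c_k|\sim |(f_t^{k-1})'(c_1)|$ gives $\delta^{1/2}$ per index and $\delta^{1/2}|\log\delta|$ after summing over $k\lesssim \log(1/\delta)$; in the notation of \S\ref{theproofs} this is the dominant third term $(\Pi_t-\Pi_{t_0})(\hat\rho_{t_0,M})$ of the decomposition \eqref{dec0}. The genuine gap is in how you dispose of everything else. You posit a splitting $\rho_t=\rho_t^{reg}+\sum_k w_k(t)\,s(\cdot\,;c_k(t))$ and claim that $\rho_t^{reg}$ depends Lipschitz-continuously on $t$ ``as in the toy model'' because of the Keller--Nowicki/Young spectral gap \cite{keno,young92}. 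This step fails: $\rho_t^{reg}$ is not the fixed point of any operator with a uniform gap --- the transfer operator does not preserve the regular/singular splitting (applying $\LL_t$ to a spike creates new spikes as well as regular pieces, and the splitting itself depends on the entire postcritical orbit) --- and, more fundamentally, spectral perturbation theory in the presence of bifurcations cannot deliver Lipschitz, or even $O(\delta^{1/2})$, control in a strong norm. This is precisely the obstruction recalled in \S\ref{theproofs}: direct application of Keller--Liverani \cite{KeLi} bounds the perturbative terms only by $\delta^{\eta}$ with $\eta<1/2$, and ``it is usually not possible to obtain strong norm bounds when bifurcations are present'' \cite{BY,KeLi}. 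The entire technical content of \cite{BBS} is the circumvention of this: transfer operators $\widehat\LL_t$ on towers, truncated operators $\widehat\LL_{t,M}$ with the truncation level $M$ tied to $\delta$ through the notion of admissible pair (so that the towers of $f_t$ and $f_{t_0}$ coincide up to level $M$), the decomposition \eqref{dec0}, a strong-norm estimate of $\widehat\LL_{t,M}-\widehat\LL_{t_0,M}$ \emph{acting on the maximal eigenvector}, and an enhanced Keller--Liverani argument for the first term. Your proposal has no counterpart for any of this, and without it the non-spike part of $\rho_t-\rho_{t_0}$ is simply not controlled at the $\delta^{1/2}$ scale, so \eqref{main0} does not follow.

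Two further mismatches. First, your parameter exclusion is of Benedicks--Carleson/Tsujii type \cite{Ts} (exponential recurrence), which leads to the exponentially decaying towers of \cite{BV,BS2}; the paper states explicitly that such towers yield at best an exponent $\eta<1/2$ and no lower bound. In \cite{BBS} the set $\Delta_{t_0}$ consists of \emph{polynomially} recurrent parameters, constructed from Gao--Shen \cite{GS}, because the ``fat towers'' with polynomially decaying level sizes are what make the exponent $1/2$ (and the logarithmic correction $\Gamma>4$) attainable: the choice of parameter set is dictated by the tower geometry, not merely by uniformity of the Collet--Eckmann constants. Second, for the lower bound \eqref{main} you correctly isolate the nondegenerate leading square-root response (via $f''(c)<0$ and $\partial_t c_1\ne 0$), but you only flag, without resolving, the possible cancellation against the subleading spikes and the regular part. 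In \cite{BBS} this is resolved structurally: for a Misiurewicz--Thurston $f_{t_0}$ the tower levels can be taken of size bounded below and the truncation level slightly larger, the perturbative terms then satisfy a genuine $O(\delta^{1/2})$ bound (without logarithms), and Banach--Sobolev norms based on $L^p$ with $p>1$ are introduced precisely to control the constants so that the spike-displacement term provably dominates the other two terms of \eqref{dec0}. Without such a mechanism, your lower bound remains a plausibility argument rather than a proof.
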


The exponent
$1/2$ appearing in the theorem is directly related to the
nondegeneracy assumption $f''(c)\ne 0$, which of course holds true
for the quadratic family. 
Note also that using a $C^\infty$ (instead of $C^{1/2}$)
observable does not seem to allow better upper bounds 
in \eqref{main0}. It is unclear if the logarithmic factor in \eqref{main0}
is an artefact of the proof or can be discarded. 

The proof of the  claim \eqref{main} of the  theorem  gives a sequence $t_n$ of Misiurewicz--Thurston
parameters, but the continuity result of Tsujii \cite{Ts} easily
yields sequences of non Misiurewicz--Thurston (but CE) parameters
$t_n$. We do \emph{not know} whether $t_0$ is a Lebesgue density point
of the set of sequences giving 
\eqref{main}. Note that in the toy model from \S~\ref{pw},
the first analogous construction of
counter-examples (Theorem~\ref{counter}) was  limited  to
a handful of preperiodic parameters
(sequences of maps having preperiodic critical points converging
to a map $f_{t_0}$ with a preperiodic critical point), while the currently known
set of examples (see \eqref{starstar} and \eqref{starstarstar}) are much
more general, although  not fully satisfactory yet.
One important open problem is to describe precisely the set of
sequences $t_n\to t_0$ giving rise to violation of linear response
for the generic piecewise expanding unimodal maps with dense postcritical orbits
in \eqref{starstarstar}. This may give useful insight for smooth unimodal maps,
both about the
largest possible set of sequences giving 
\eqref{main}, and about relaxing
the Misiurewicz--Thurston assumption on $f_{t_0}$. 
(Note however that there is a quantitative difference
with respect to the piecewise expanding case \cite{BS1}, where
the modulus of continuity in the transversal case was 
 $|\log |t-t_0|| |t-t_0|$, so that violation of linear response arose from
 the logarithmic factor alone.)

We suggested in \cite{BBS} the following  weakening
of the linear response problem: Consider a one-parameter family $f_t$ of
(say, smooth unimodal maps) through $f_{t_0}$ and, for each 
$\epsilon>0$, a random perturbation of $f_t$ with unique invariant
measure $\mu_{t}^\epsilon$ like in \cite{WX}, e.g. Then  for each positive
$\epsilon$, it should not be very difficult to
see that the map $t \to \mu_{t}^\epsilon$ is differentiable at $t_0$
(for essentially any topology in the image). 
Taking a weak topology in the
image,
like Radon measures, or distributions of positive order,
does the limit as $\epsilon \to 0$ of this derivative exist?
How is it related with the perturbation?
with the susceptibility function or some of
its generalised continuations (e.g. in the sense of \cite{BMS})?

\smallskip
 More open questions  are listed in
\cite{B2} and \cite{BS2, BBS}.
In particular, the results in \cite{BBS}  give hope that linear response
or its breakdown
(see  \cite{B2} and \cite{R7}) can be studied
for (the two-dimensional) H\'enon family,  which is transversal,
and where continuity
of the SRB measure in the weak-$*$ topology
was proved by Alves et al. \cite{ACF, ACF2} in the sense of Whitney 
on
suitable parameter sets.
In \cite[(17), (19)]{B2}, we also give candidates for the notion of horizontality 
for piecewise expanding maps in higher dimensions and piecewise
hyperbolic maps.

\smallskip
\subsection{About the proofs}
\label{theproofs}

The main tool in the proof of Theorem~\ref{last} is
a \emph{tower construction:} 
We wish to compare the SRB measure 
of $f_{t_0}$  to that of $f_t$ for  small $t-t_0$. 
Just like in \cite{BS2}, we use transfer operators
$\widehat \LL_t$ acting on towers, with a projection $\Pi_t$ from
the tower to $L^1(I)$ so that $\Pi_t \widehat \LL_t=\LL_t \Pi_t$,
where $\LL_t$ is the usual transfer operator, and $\Pi_t \hat \rho_t=\rho_t$
with $\mu_t=\rho_t\, dx$ (here, $\hat\rho_t$ is the fixed point
of $\widehat\LL_t$, and $\rho_t$ is the invariant density of $f_t$). In
\cite{BS2}, we adapted the tower  construction from \cite{BV}
(introduced in \cite{BV} to study random perturbations, for
which this version is better suited than the otherwise ubiquitous Young
towers \cite{YoYo}). This construction
allows in particular to work with
Banach spaces of continuous functions.  Another idea  imported from
\cite{BS2} is the use of  operators $\widehat \LL_{t,M}$
acting on truncated towers,
where the truncation level $M$ must be chosen carefully depending on $t-t_0$.
Roughly speaking, the idea is that   $f_t$ is comparable to $f_{t_0}$
for $M$ iterates (corresponding to the $M$ lowest levels
of the respective towers), this is the notion of an \emph{admissible pair} $(M,t)$.
Denoting by $\hat \rho_{t,M}$ the maximal eigenvector
of  $\widehat \LL_{t,M}$, the starting point for both  upper and lower bounds
is (like in \cite{BS2}) the decomposition 
\begin{align}\label{dec0}
\rho_t - \rho_{t_0}&=\bigl [\Pi_t (\hat \rho_t - \hat \rho_{t, M}) +
 \Pi_{t_0} (\hat \rho_{t_0,M}-\hat \rho_{t_0})\bigr ] \\
\nonumber &\qquad\quad+
[\Pi_t ( \hat \rho_{t,M} -\hat \rho_{t_0,M}  )]
+[ (\Pi_t -\Pi_{t_0}) (\hat \rho_{t_0,M})] \, ,
\end{align}
for admissible pairs.
The idea is then to get upper bounds on the first two terms by using
perturbation theory \`a la Keller--Liverani \cite{KeLi}, and to control
the last (dominant) term  by explicit computations on $\Pi_t -\Pi$
(which represents the ``spike displacement,'' i.e., the effect of the
replacement of $1/\sqrt{|x-f^k_{t_0}(c)|}$ by $1/\sqrt{|x-f^k_t(c)|}$ in the invariant
density).

We now move to the differences between \cite{BS2} and \cite{BBS}:
Using a tower with exponentially decaying levels
as in \cite{BV} or \cite{BS2} would  provide
at best an upper modulus of continuity $ |t-t_0|^{\eta}$ for $\eta <1/2$, and would
not yield any lower bound.
For this reason, we use instead ``fat towers''
with {\it polynomially decaying}
sizes in \cite{BBS}, working with polynomially recurrent maps. In order to construct the corresponding parameter set, we use recent results of Gao and Shen \cite{GS}.

Applying directly the results of Keller--Liverani \cite{KeLi}
would only bound the contributions
of the first and second terms of \eqref{dec0}  by $ |t-t_0|^{\eta}$ for $\eta <1/2$.
In order to estimate
the second term, we prove that $\widehat \LL_{t,M} -\widehat  \LL_{t_0,M}$ {\it acting
on the maximal eigenvector} is $O(|\log |t-t_0||^\Gamma |t-t_0|^{1/2})$
\emph{in the strong
\footnote{The strong norm plays here the role
of $C^{j}$ in  \eqref{LaYo}.} norm;}
in the Misiurewicz--Thurston case we get
get a better $O( |t-t_0|^{1/2})$ control).
It is usually not possible to obtain strong
norm bounds when bifurcations are present \cite{BY, KeLi}, and this remarkable feature here
is due to our choice of admissible pairs (combined with 
the fact that the towers for $f_t$ and $f_{t_0}$ are identical up to level $M$).  To estimate
the first term, we enhance the Keller--Liverani argument, using again
that it suffices to estimate the
 perturbation for the operators acting on the maximal eigenvector. 
 
 The changes just described are already needed to obtain the exponent
 $1/2$ in the upper bound~\eqref{main0}. 
 To get lower bound in~\eqref{main}, we use that the tower associated to a
 Misiurewicz--Thurston map $f_{t_0}$ can be required to have levels
 with sizes bounded from below, and that the truncation level
 can be chosen to be slightly larger. Finally, working
 with Banach norms based on $L^1$ as in \cite{BS2} would give that the
 first two terms  in \eqref{dec0} are $\le C  |t-t_0|^{1/2}$, while the third is 
 $\ge C ^{-1} |t-t_0|^{1/2}$ for some large constant $C>1$.
  However, \emph{introducing Banach--Sobolev norms based on $L^p$ 
 for $p>1$} instead, we are able  to control the constants and show that the last
 term dominates the other two.


\end{document}